\newtheorem{theorem}{Theorem}[section]
\newtheorem{proposition}[theorem]{Proposition}
\newtheorem{lemma}[theorem]{Lemma}
\newtheorem{corollary}[theorem]{Corollary}
\newtheorem{remark}[theorem]{Remark}
\def\cC{\mathcal C}
\def\cH{\mathcal H}
\def\cX{\mathcal X}
\def\cY{\mathcal Y}
\def\K{\mathbb{F}_{q^{2n}}}
\def\ord{\mbox{\rm ord}}
\def\deg{\mbox{\rm deg}}
\newcommand{\aut}{\mbox{\rm Aut}}
\title{A new family of maximal curves}
\date{}
\author{Peter Beelen and Maria Montanucci}
\begin{document}
\maketitle

\begin{abstract}
In this article we construct for any prime power $q$ and odd $n \ge 5$, a new  $\mathbb{F}_{q^{2n}}$-maximal curve $\mathcal X_n$. Like the Garcia--G\" uneri--Stichtenoth maximal curves, our curves generalize the Giulietti--Korchm\'aros maximal curve, though in a different way. We compute the full automorphism group of $\mathcal X_n$, yielding that it has precisely $q(q^2-1)(q^n+1)$ automorphisms. Further, we show that unless $q=2$, the curve $\mathcal X_n$ is not a Galois subcover of the Hermitian curve. Finally, up to our knowledge, we find new values of the genus spectrum of $\mathbb{F}_{q^{2n}}$-maximal curves, by considering some Galois subcovers of $\mathcal X_n$.
\end{abstract}

{\bf 2000 MSC:} 11G20 (primary), 11R58, 14H05 (secondary)

\section{Introduction}

Let $\mathcal K$ be an absolutely irreducible, non-singular, projective algebraic curve defined over the finite field $\mathbb{F}_q$ with $q$ elements. The famous Hasse--Weil bound states that $\mathcal K$ can have at most $q+1+2g(\mathcal K)\sqrt{q}$ points defined over $\mathbb{F}_q$, where $g(\mathcal K)$ denotes the genus of the curve $\mathcal K$. The curve $\mathcal K$ is called $\mathbb{F}_q$-maximal if it attains the Hasse--Weil bound. A curve of genus $0$ is ``maximal'' for any $q$, but otherwise this can only be the case if the cardinality $q$ of the finite field is a square.

An important and well-studied example of an $\mathbb{F}_{q^{2}}$-maximal curve is given by the Hermitian curve $\mathcal{H}_q$.
It is a plane curve, which can be defined by the affine equation
$$\mathcal{H}_q: \ Y^{q+1}  = X^{q+1}-1.$$
Other equivalent affine equations are often given, such as the equation $Y^{q+1}=X^{q}+X,$ but for our purposes the above equation will be most convenient.
For fixed $q$, the curve $\mathcal{H}_q$ has the largest possible genus $g(\mathcal{H}_q) =q(q-1)/2$ that an $\mathbb{F}_{q^2}$-maximal curve can have. The automorphism group of $\mathcal H_q$ is known to be $PGU(3,q)$, which is a group of order $(q^3+1)q^3(q^2-1).$ A result commonly attributed to Serre, see \cite[Proposition 6]{L1987}, gives that any $\mathbb{F}_{q^2}$-rational curve which is covered by an $\mathbb{F}_{q^2}$-maximal curve is itself also $\mathbb{F}_{q^2}$-maximal. Therefore many maximal curves can be obtained by constructing subcovers of already known maximal curves, in particular subcovers of the Hermitian curve. For a while it was speculated in the research community that perhaps all maximal curves could be obtained as subcovers of the Hermitian curve, but it was shown by Giulietti and Korchm\'aros that this is not the case; see \cite{GK}. More precisely, they constructed an $\mathbb{F}_{q^6}$-maximal curve which is not a subcover of the Hermitian curve $\mathcal H_{q^3}$. We will denote this $\mathbb{F}_{q^6}$-maximal curve by $\mathcal C$ and call it the Giulietti--Korchm\'aros (GK) curve.
It is defined over $\mathbb{F}_{q^6}$ by the affine equations
$$\mathcal{C}: \begin{cases} Z^{q^2-q+1}  = Y \frac{X^{q^2}-X}{X^q+X}\\ Y^{q+1}  = X^q+X\end{cases} .$$
The GK curve can be seen as a cover of the Hermitian curve $\mathcal{H}_q.$

A generalization of $\mathcal{C}$, often called the Garcia--G\"uneri--Stichtenoth (GGS) curve or the generalized GK curve,  was introduced in \cite{GGS}. For any prime power $q$ and odd $n \ge 3$, they found an $\mathbb{F}_{q^{2n}}$-maximal curve $\mathcal C_n$ with genus $g(\mathcal C_n)=(q-1)(q^{n+1}+q^n-q^2)/2$, given by the affine equations
$$\mathcal{C}_n: \begin{cases} Z^{m}=Y \frac{X^{q^2}-X}{X^q+X}\\ Y^{q+1}=X^q+X\end{cases},$$
where $m:=(q^n+1)/(q+1)$. Note that in particular $\mathcal{C}=\mathcal{C}_3$. Moreover, it is known that for any $q$ and $n \ge 5$, the curve is not a Galois subcover of the Hermitian curve $\mathcal H_{q^n}$; \cite{DM,GMZ}.
The automorphism group of $\mathcal{C}$ was determined in \cite{GK}. There it was shown that each automorphism of $\mathcal H_q$ can be extended to an automorphism of $\mathcal C$ in exactly $m$ distinct ways and that each automorphism of $\mathcal C$ is obtained in that way. In other words: the automorphism group of $\mathcal H_q$, can be lifted completely and $\mathcal C$ has exactly $(q^3+1)q^3(q^2-1)m$ automorphisms. For $n \ge 5$ a similar statement is not true: it was shown in \cite{GOS,GMP} that only the stabilizer of the infinite point $Q_\infty$ of $\mathcal H_q$ can be lifted completely and that any automorphism of $\mathcal C_n$ is obtained in this way, yielding $|Aut(\mathcal{C}_n)|= q^3(q^2-1)m$. Note that from \cite[Theorem A.10]{HKT} the stabilizer of $Q_\infty$ is a maximal subgroup of $Aut(\mathcal H_q) \cong PGU(3,q)$ of order $q^3(q^2-1)$.

It is natural to ask whether or not it is possible to construct other generalizations of $\mathcal{C}$ such that other subgroups of $PGU(3,q)$ can be lifted completely. In this article was give a possible answer for the maximal subgroup of $PGU(3,q)$ given by the non-tangent line stabilizer of $\mathcal H_q$. This group has order $(q+1)q(q^2-1)$ and permutes the $q+1$ $\mathbb{F}_{q^2}$-rational points of $\mathcal H_q$ lying on the intersection of $\mathcal H_q$ with an $\mathbb{F}_{q^2}$-rational non-tangent line. In the following we construct for each odd $n\ge 5$, an $\mathbb{F}_{q^{2n}}$-maximal curve $\mathcal X_n$ in which precisely this non-tangent line stabilizer of $PGU(3,q)$ is lifted completely. Also we will show that any automorphism of $\mathcal X_n$ is obtained in this way, yielding that $\mathcal X_n$ has precisely $(q+1)q(q^2-1)m=q(q^2-1)(q^n+1)$ automorphisms. Moreover, we show that even though $g(\mathcal X_n)=g(\mathcal C_n)$, the curve $\mathcal X_n$ not isomorphic to $\mathcal C_n$ and cannot be Galois covered by the Hermitian curve $\mathcal H_{q^n}$ if $q>2$. Finally, we show that several new genera for $\mathbb{F}_{q^{2n}}$-maximal curves can be obtained by considering some of the subcovers of $\mathcal X_n$.

\section{The curves $\mathcal{X}_n$}

The starting point of our generalization of the GK curve is to consider a different model of $\mathcal{C}$. This model also occurs in \cite{GQZ}. Let $\rho \in \mathbb{F}_{q^2}$ with $\rho^q+\rho=1$, and consider the $\mathbb{F}_{q^2}$-projectivity $\varphi$ associated to the matrix $A$ where
$$A:=
\begin{pmatrix} 1 & 0 & 0 & 1\\ 0 & 1 & 0 & 0 \\ 0 & 0 & -1 & 0 \\ 1-\rho & 0 & 0 & -\rho \end{pmatrix}.$$
In terms of the affine coordinates $X, Y, Z$ this amounts to the algebraic transformation
$$\varphi(X)=\dfrac{X+1}{(1-\rho)X-\rho}, \ \varphi(Y)=\dfrac{Y}{(1-\rho)X-\rho}, \ \makebox{and}  \ \varphi(Z)=\dfrac{-Z}{(1-\rho)X-\rho}.$$
Also using the variables $X$, $Y$, and $Z$ for $\cX:=\varphi(\mathcal{C})$, we obtain that $\cX$ is given by the affine equations
$$\mathcal{X}: \begin{cases} Z^{q^2-q+1}=Y \frac{X^{q^2}-X}{X^{q+1}-1}\\ Y^{q+1}=X^{q+1}-1\end{cases} .$$
Inspired by this, we consider for each odd $n$ with $n \ge 3$, the algebraic curve $\mathcal X_n$ given by the affine equations
\begin{equation}\label{xn}
\mathcal{X}_n: \begin{cases} Z^{m}=Y \frac{X^{q^2}-X}{X^{q+1}-1}\\ Y^{q+1}=X^{q+1}-1\end{cases},
\end{equation}
where $m:=(q^n+1)/(q+1)$. In particular, as for the construction in \cite{GGS}, we note that $\mathcal{X}=\mathcal{X}_3,$ which we have seen to be isomorphic to $\mathcal{C}_3$. As we will see later, for $n \ge 5$ the curves $\mathcal X_n$ are new and in particular not isomorphic to $\mathcal C_n$. We start our investigation of the curves $\mathcal X_n$ by computing their genera.

\begin{proposition} \label{genus}
Let $q$ be a power of a prime $p$ and let $n \geq 5$ odd. Then the genus of $\mathcal{X}_n$ is given by
$$g(\mathcal{X}_n)=g(\mathcal{C}_n)=\frac{(q-1)(q^{n+1}+q^n-q^2)}{2}.$$
\end{proposition}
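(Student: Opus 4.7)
I would view $\mathcal{X}_n$ as a degree-$m$ Kummer cover of $\mathcal{H}:Y^{q+1}=X^{q+1}-1$ and apply Riemann--Hurwitz. First note that $\mathcal{H}$ is $\mathbb{F}_{q^2}$-isomorphic to the Hermitian curve (via $(X,Y)\mapsto(X,\zeta Y)$ with $\zeta^{q+1}=-1$, possible in any characteristic), so $g(\mathcal{H})=q(q-1)/2$ and $\mathcal{H}$ carries exactly $q^3+1$ $\mathbb{F}_{q^2}$-rational places: $q+1$ infinity places $Q_1,\dots,Q_{q+1}$ and $q^3-q$ affine ones. Substituting $Y^{q+1}=X^{q+1}-1$ into the first defining equation of $\mathcal{X}_n$ rewrites the Kummer extension as $Z^m=u$ with $u:=(X^{q^2}-X)/Y^q$, a cleaner form for divisor-theoretic bookkeeping.

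\textbf{The divisor of $u$ on $\mathcal{H}$.} Second, I would compute $\div(u)$. Split the affine $\mathbb{F}_{q^2}$-rational points of $\mathcal{H}$ into three families: the $q+1$ points $R_k$ with $X=0$; the $q+1$ points $P_j$ with $Y=0$ (equivalently $X^{q+1}=1$); and the remaining $(q^2-q-2)(q+1)$ points $S_{k,\ell}$. At $R_k$ and $S_{k,\ell}$ the local parameter is $X-X(R_k)$ respectively $X-X(S_{k,\ell})$, giving $v(u)=1$ at once. At $P_j$ the subtlety arises: the tangent line of $\mathcal{H}$ is $X=X(P_j)$, so $Y$ (not $X-X(P_j)$) is the uniformizer, and from $Y^{q+1}=(X-X(P_j))\cdot(\mathrm{unit})$ the function $X-X(P_j)$ has valuation $q+1$; the factor $X-X(P_j)$ appearing in $X^{q^2}-X$ therefore contributes valuation $q+1$, trimmed by the $Y^q$ in the denominator to leave $v_{P_j}(u)=1$. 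At the infinity places one finds $v_{Q_i}(u)=-q^2-(-q)=-q(q-1)$. The degree balance $(q^3-q)\cdot 1 = (q+1)\cdot q(q-1) = q(q^2-1)$ confirms the computation.

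\textbf{Full tame ramification.} Third, since $u$ has a simple zero it is not a proper power in the function field, so $Z^m=u$ defines a Kummer extension of degree exactly $m$ (the required $\zeta_m$ lies in $\mathbb{F}_{q^{2n}}$ since $m\mid q^n+1\mid q^{2n}-1$). For a Kummer extension the number of places of $\mathcal{X}_n$ above a place $P$ of $\mathcal{H}$ equals $d_P:=\gcd(m,v_P(u))$, each of ramification index $m/d_P$, so full ramification at every $\mathbb{F}_{q^2}$-rational place of $\mathcal{H}$ reduces to showing $\gcd(m,q(q-1))=1$. One has $m\equiv 1\pmod{q}$ at once, and the identity $m=1-q+q^2-\cdots+q^{n-1}$ (valid for odd $n$) reduces modulo $q-1$ to $1$, giving $\gcd(m,q-1)=1$. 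In particular $\gcd(m,p)=1$, hence all ramification is tame.

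\textbf{Riemann--Hurwitz.} Plugging into Riemann--Hurwitz yields
$$2g(\mathcal{X}_n)-2 \;=\; m(q^2-q-2)+(q^3+1)(m-1) \;=\; m(q^3+q^2-q-1)-(q^3+1).$$
Using $q^3+q^2-q-1=(q-1)(q+1)^2$ and $m(q+1)=q^n+1$, the right-hand side becomes $(q^2-1)(q^n+1)-(q^3+1)=q^{n+2}-q^n-q^3+q^2-2$, so $g(\mathcal{X}_n)=(q-1)(q^{n+1}+q^n-q^2)/2$, which is exactly $g(\mathcal{C}_n)$. The single delicate step is the local analysis at the $P_j$, where $X-X(P_j)$ is \emph{not} a uniformizer; exploiting $Y^{q+1}=X^{q+1}-1$ to deduce $v_{P_j}(X-X(P_j))=q+1$ is the key input that cancels against $Y^q$ to make $u$ have only simple zeros at those points.
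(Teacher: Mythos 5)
Your proposal is correct and takes essentially the same route as the paper: a degree-$m$ Kummer extension of the Hermitian function field $y^{q+1}=x^{q+1}-1$, computation of the divisor of $u=y(x^{q^2}-x)/(x^{q+1}-1)=(x^{q^2}-x)/y^q$ (supported exactly on the $q^3+1$ rational places, with simple zeros and poles of order $q(q-1)$), followed by Riemann--Hurwitz. The only cosmetic differences are that the paper obtains the simple zeros at the $y=0$ places by factoring $u$ as $y\cdot\prod_{a^{q+1}\neq 1}(x-a)$ rather than by your local tangent-line analysis, and that you spell out the check $\gcd(m,q(q-1))=1$ for total ramification at the poles, which the paper leaves implicit.
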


\begin{proof}
Let $\K(x,y)$ be the function field defined by $y^{q+1}=x^{q+1}-1$ and $\mathcal{H}_q: Y^{q+1}-X^{q+1}+1$ be the Hermitian curve over $\mathbb{F}_{q^2}$ as before. Seen as a projective curve, $\mathcal{H}_q$ is nonsingular having $q+1$ infinite points, namely $Q_{\infty}^i=(1:\alpha_i:0)$ where $\alpha_i^{q+1}=1$ for $i=1,\ldots,q+1$. Let $P_{\infty}^i$ with $i=1,\ldots,q+1$ be the place of $\K(x,y)$ corresponding to the point $Q_{\infty}^i$. Further, for $a \in \K$ such that $a^{q+1} \neq 1$, let $P_{x=a}^1,\ldots,P_{x=a}^{q+1}$ and $P_{y=0}^1,\ldots,P_{y=0}^{q+1}$ be the places lying over $x=a$ in $\K(x,y) / \K(x)$ and over $y=0$ in $\K(x,y)/\K(y)$ respectively. Then we have the following identities of divisors in $\K(x,y)$,
$$(x-a)_{\K(x,y)} = \sum_{j=1}^{q+1} P_{x=a}^j - \sum_{i=1}^{q+1} P_{\infty}^i, \quad  {\rm and}, \quad  (y)_{\K(x,y)}=\sum_{j=1}^{q+1} P_{y=0}^j - \sum_{i=1}^{q+1} P_{\infty}^i.$$
This implies that
\begin{align*}
\bigg( y \frac{x^{q^2}-x}{x^{q+1}-1} \bigg)_{\K(x,y)}
 &=\sum_{i=1}^{q+1} P_{y=0}^i-\sum_{j=1}^{q+1} P_{\infty}^j+ \sum_{\substack{a \in \mathbb{F}_{q^2}, \\ a^{q+1} \ne 1}} \ \sum_{k=1}^{q+1} P_{x=a}^k -(q^2-q-1) \sum_{j=1}^{q+1} P_{\infty}^{j}\\
&=\sum_{i=1}^{q+1} P_{y=0}^i+ \sum_{\substack{a \in \mathbb{F}_{q^2}, \\ a^{q+1} \ne 1}} \ \sum_{k=1}^{q+1} P_{x=a}^k -(q^2-q) \sum_{j=1}^{q+1} P_{\infty}^{j}.\\
\end{align*}
Thus, the support of the divisor of $y (x^{q^2}-x)/(x^{q+1}-1)$ consists of $q^3+1$ places of $\K(x,y)$, corresponding precisely to the $\mathbb{F}_{q^2}$-rational points of $\mathcal H_q$. The divisor of $y (x^{q^2}-x)/(x^{q+1}-1)$ also shows that this function is not a $d$-th power of another function in $\K(x,y)$ for any $d>1$. Hence $\K(x,y,z)/\K(x,y)$ is a Kummer extension of degree $m$. The standard theory of Kummer extensions, see for example \cite[Proposition 3.7.3]{Sti}, implies that the $q^3+1$ places occurring in the divisor of $y (x^{q^2}-x)/(x^{q+1}-1)$ are precisely the ramified places of $\K(x,y)$ in the extension $\K(x,y,z)/\K(x,y)$. Moreover all these places are totally ramified. Using the Riemann--Hurwitz formula and the fact that $g(\K(x,y))=q(q-1)/2$, yields
$$2g(\K(x,y,z))-2=m(q(q-1)-2)+(m-1)(q^3+1).$$
Hence for $n \geq 3$ odd, we obtain
$$g(\mathcal{X}_n)=g(\K(x,y,z))=\frac{(q-1)(q^{n+1}+q^n-q^2)}{2}=g(\mathcal{C}_n).$$
\end{proof}
The following corollary is a direct consequence of Proposition \ref{genus} and a result from \cite{DM}.

\begin{corollary} \label{notgal}
Let $n \ge 3$ be an odd integer and suppose $q \ge 3$. Then the curve $\mathcal{X}_n$ defined in Equation \eqref{xn}, is not a Galois subcover of the Hermitian curve $\mathcal H_{q^n}$.
\end{corollary}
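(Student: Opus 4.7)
The plan is to reduce the statement to a genus-based obstruction already present in the literature. By Proposition \ref{genus}, we know that
$$g(\mathcal{X}_n) = \frac{(q-1)(q^{n+1}+q^n-q^2)}{2} = g(\mathcal{C}_n),$$
so the genus of $\mathcal{X}_n$ coincides with that of the GGS curve. The key ingredient to invoke is the result of \cite{DM}, which (as used in the introduction to conclude that $\mathcal{C}_n$ itself is not a Galois subcover of $\mathcal{H}_{q^n}$ for $n \ge 5$) rules out the existence of any $\mathbb{F}_{q^{2n}}$-rational Galois subcover of $\mathcal{H}_{q^n}$ with that particular genus, whenever $q \ge 3$ and $n \ge 5$ is odd. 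Because this obstruction depends only on the value of the genus and not on any specific defining equations, it applies verbatim to $\mathcal{X}_n$.

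The remaining case $n=3$ needs a separate (and easier) argument, since the \cite{DM} result is restricted to $n \ge 5$. Here one uses the fact, already recorded in the discussion preceding Equation \eqref{xn}, that $\mathcal{X}_3$ is isomorphic to the Giulietti--Korchm\'aros curve $\mathcal{C}$. The seminal result of \cite{GK} shows that $\mathcal{C}$ is not a subcover of $\mathcal{H}_{q^3}$ at all (Galois or not), which settles this case immediately.

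The main obstacle in carrying out the plan is purely bibliographic: one must confirm that the statement proved in \cite{DM} is really a classification of possible genera of $\mathbb{F}_{q^{2n}}$-rational Galois quotients of $\mathcal{H}_{q^n}$ (obtained by running through subgroups of $PGU(3,q^n)$ and applying the Riemann--Hurwitz formula), rather than a result phrased only for the specific function field of $\mathcal{C}_n$. Assuming this is the case, Proposition \ref{genus} supplies the only input needed, and no further computation is required. If one wished a self-contained argument, one would reproduce the subgroup-by-subgroup analysis of $PGU(3,q^n)$, but this is routine in light of the explicit genus formula for $\mathcal{X}_n$ and the maximality constraint imposed by Serre's result.
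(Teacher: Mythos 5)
Your proof is correct and follows essentially the same route as the paper: both reduce the statement to the genus computation of Proposition \ref{genus} combined with the observation from \cite{DM} that, for $q\ge 3$ and $n$ odd, \emph{no} curve over $\mathbb{F}_{q^{2n}}$ of genus $(q-1)(q^{n+1}+q^n-q^2)/2$ can be a Galois subcover of $\mathcal{H}_{q^n}$ (this is exactly the bibliographic point you flag, and it is settled by the remark directly following \cite[Proposition 5.1]{DM}, which is phrased in terms of the genus alone). The one divergence is that this remark already covers all odd $n\ge 3$ when $q\ge 3$, so your separate treatment of $n=3$ via \cite{GK}, while valid, is not needed.
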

\begin{proof}
In \cite{DM} it was shown that for $q \ge 3$ and $n \ge 3$ odd, the curve $\mathcal C_n$ is not a Galois subcover of the Hermitian curve $\mathcal H_{q^n}.$ There, it was noted in a remark directly following \cite[Proposition 5.1]{DM} that in fact for $q \geq 3$ a prime power and $n \geq 3$ an odd integer, \emph{any} curve defined over $\mathbb{F}_{q^{2n}}$ of genus $g(\mathcal C_n)=(q-1)(q^{n+1}+q^n-q^2)/2$ is not a Galois subcover of the Hermitian curve $\mathcal H_{q^n}.$ Hence the corollary follows from Proposition \ref{genus}.
\end{proof}

\begin{remark} \label{noteq}
Proposition \ref{genus} and the similarity between the definitions of $\mathcal C_n$ and $\mathcal X_n$ may give rise to the suspicion that these curves are isomorphic over $\mathbb{F}_{q^{2n}}$.
However, using the same projectivity $\varphi$ as before, note that $\tilde{\mathcal{C}}_n=\varphi({{\mathcal{C}}_n})$ is given by the affine equations:
$$\tilde{\mathcal{C}}_n: \begin{cases} Z^{m}=Y \frac{X^{q^2}-X}{X^{q+1}-1}(X-1)^{m-(q^2-q+1)}\\ Y^{q+1}=X^{q+1}-1\end{cases}.$$
This shows that $\mathcal{X}_3 \cong \mathcal{C}_3$, which we already knew, but no obvious isomorphism between $\mathcal{X}_n$ and $\mathcal{C}_n$ can be derived for $n\ge 5$. As mentioned already, we will show later that in fact the curves $\mathcal{X}_n$ and $\mathcal{C}_n$ are not isomorphic if $n \ge 5$.
\end{remark}

To complete the picture as to whether or not $\mathcal X_n$ is a Galois subcover of $\mathcal H_{q^n}$, we also settle the case $q=2$.
\begin{lemma} \label{q2}
Let $n \geq 3$ odd and $q=2$. Then $\mathcal{X}_n$ is a Galois subcover of the Hermitian curve $\mathcal{H}_{2^{n}}$.
\end{lemma}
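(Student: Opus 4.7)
The plan is to exploit the simplification that occurs for $q=2$: since $X^{q^2}-X = X^4-X = X(X^3-1)$, the rational function $(X^{q^2}-X)/(X^{q+1}-1)$ reduces to $X$. Hence the defining system \eqref{xn} of $\mathcal{X}_n$ collapses to
$$Z^{m}=XY, \qquad Y^{3}=X^{3}-1,$$
with $m=(2^n+1)/3$. The second equation is strongly reminiscent of the Hermitian curve $\mathcal{H}_{2^n}: V^{2^n+1}=U^{2^n+1}-1$, since $2^n+1=3m$.

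My goal would then be to realize the function field of $\mathcal{X}_n$ as a subfield of the Hermitian function field $\K(u,v)$ via the substitution
$$x \mapsto u^m, \qquad y \mapsto v^m, \qquad z \mapsto uv.$$
A direct verification confirms that both defining relations of $\mathcal{X}_n$ hold: $(v^m)^3 = v^{2^n+1} = u^{2^n+1}-1 = (u^m)^3-1$ and $(uv)^m = u^m v^m = xy$. This yields an inclusion $\K(x,y,z) \subseteq \K(u,v)$.

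To show that this inclusion is Galois, I would first use the fact that $m \mid 2^{2n}-1$, so $\K$ contains a primitive $m$-th root of unity $\gz$. The automorphism $\gs \colon u \mapsto \gz u,\ v \mapsto \gz^{-1}v$ preserves the Hermitian relation (since $\gz^{2^n+1} = \gz^{3m}=1$) and fixes each of $u^m$, $v^m$, and $uv$; hence it fixes $\K(x,y,z)$ pointwise. This produces a cyclic subgroup of order $m$ inside $\Aut(\K(u,v)/\K(x,y,z))$. The matching upper bound $[\K(u,v):\K(x,y,z)] \le m$ follows because $v = z/u$ gives $\K(u,v) = \K(x,y,z)(u)$, while $u$ is a root of $T^m - x$ over $\K(x,y,z)$. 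Thus the extension has degree exactly $m$ and is Galois. I do not foresee any serious obstacle: once the cover $(u,v) \mapsto (u^m, v^m, uv)$ is guessed, all the remaining verifications are short.
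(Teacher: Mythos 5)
Your proof is correct and uses essentially the same covering map as the paper: with $m=(2^n+1)/3$, the paper sends $(U,V)\mapsto (U^{(2^n+1)/3}, UV)$ onto the plane model $Z^{2^n+1}=X^3(X^3+1)$, which is exactly your $(u,v)\mapsto (u^m,v^m,uv)$ after eliminating $y=z^m/x$. Your explicit verification of the Galois property via the diagonal automorphisms $u\mapsto\zeta u$, $v\mapsto\zeta^{-1}v$ supplies a detail the paper leaves implicit, but the approach is the same.
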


\begin{proof}
For $q=2$, Equation \eqref{xn} reads:
$$\mathcal{X}_n: \begin{cases} Z^{m}=Y \frac{X^{4}+X}{X^{3}+1}=XY\\ Y^{3}=X^{3}+1\end{cases},$$
providing a plane model for $\mathcal{X}_n$ given by $\mathcal{Y}_n: Z^{2^n+1}=X^3(X^3+1).$  We claim that more generally for a prime power $r$, a plane curve $\mathcal{Y}$ given by an affine equation $\mathcal{Y}: Z^{r+1}=X^h(X^h+1)$ where $h \mid (r+1)$, is a Galois subcover of $\mathcal{H}_{r}: U^{r+1} - V^{r+1}+1=0$. To see this, consider the rational map $\psi: \mathcal{H}_{r} \rightarrow \mathcal{Y},$ with $\psi(U,V)=(X,Z)=(U^{\frac{r+1}{h}}, UV)$. Indeed, we note that $$Z^{r+1}-X^h(X^h+1)=(UV)^{r+1}-U^{2(r+1)}-U^{r+1}=U^{r+1}(V^{r+1} - U^{r+1}-1)=0.$$
This proves the claim.
Since $3$ divides $2^n+1$ for odd $n$, the lemma follows from the claim we just proved by applying it to the case $q=2^n$ and $h=3$.
\end{proof}

This lemma immediately implies that $\mathcal{X}_n$ is $\mathbb{F}_{2^{2n}}$-maximal, since it is a subcover of an $\mathbb{F}_{2^{2n}}$-maximal curve.
Lemma \ref{q2} also reveals a first difference between the curves $\cC_n$ and $\cX_n$. Indeed, for $q=2$, $\mathcal C_3$ was shown to be a Galois subcover of $\mathcal{H}_{q^3}$ in \cite{GK},  but in \cite{GMZ} it is proved that, for $q=2$ and $n \geq 5$, $\mathcal{X}_n$ is not a Galois subcover of $\mathcal{H}_{q^n}$.
Combining the above, we see that for $q=2$, the curves $\mathcal{C}_n$ and $\mathcal{X}_n$ are isomorphic if and only if $n =3$. We now prove that the same holds for $q \geq 3$. This result will follow as an immediate corollary of the following theorem.

\begin{theorem} \label{lift}
Let $q$ be a prime power and $n \geq 3$ odd. Then $Aut(\mathcal{X}_n)$ contains a subgroup $G$ with $G \cong SL(2,q) \rtimes C_{q^n+1}$, where $C_k$ denotes a cyclic group of order $k$. Moreover, denoting by $Fix(G)$, the fixed field of $G$ in $\K(x,y,z)$, we have $Fix(G)=\K(z^{q^n+1})$. Further, there exists a subgroup $S$ of $G$ with $S \cong SL(2,q) \times C_m$ and $Fix(S)=\K(z^m)=\K(y(x^{q^2}-x)/(x^{q+1}-1))$.
\end{theorem}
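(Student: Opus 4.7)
The plan is to exhibit explicit automorphisms realizing both factors of $G$ and to derive the fixed fields via degree computations. For the cyclic factor, each $\eta\in\mathbb{F}_{q^{2n}}^*$ with $\eta^{q^n+1}=1$ yields $\mu:=\eta^m$ with $\mu^{q+1}=\eta^{q^n+1}=1$, so $\sigma_\eta\colon(x,y,z)\mapsto(x,\mu y,\eta z)$ preserves both defining equations of $\mathcal{X}_n$; this embeds $H\cong C_{q^n+1}$ into $\aut(\mathcal{X}_n)$.

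For the $SL(2,q)$-factor, I view $\mathcal{H}_q$ in the symmetric projective form $X^{q+1}-Y^{q+1}=W^{q+1}$, in which $W=0$ is a non-tangent line. Its stabilizer in $PGU(3,q)$ contains $SU(2,q)\cong SL(2,q)$, consisting of matrices $M=\left(\begin{smallmatrix}a&b\\c&d\end{smallmatrix}\right)$ over $\mathbb{F}_{q^2}$ satisfying $a^{q+1}-c^{q+1}=1$, $d^{q+1}-b^{q+1}=1$, $ab^q=cd^q$, and $\det M=1$. Expanding $(ax+by)^{q+1}-(cx+dy)^{q+1}$ and using these relations collapses the cross terms to give exactly $x^{q+1}-y^{q+1}$, so $(x,y)\mapsto(ax+by,cx+dy)$ preserves $\mathcal{H}_q$; I lift to $\mathcal{X}_n$ by declaring $z$ fixed. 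The crucial invariance is that $v:=z^m=y(x^{q^2}-x)/(x^{q+1}-1)$ is fixed by this action. The divisor $(v)$ computed in Proposition \ref{genus} assigns multiplicity $+1$ to each of the $q^3-q$ finite and $-(q^2-q)$ to each of the $q+1$ infinite $\mathbb{F}_{q^2}$-rational places of $\mathcal{H}_q$; since the linear action fixes $W$, it preserves both sets. Hence $M\cdot v=\chi(M)v$ for a character $\chi\colon SL(2,q)\to\mathbb{F}_{q^{2n}}^*$, which is trivial because $SL(2,q)$ is perfect for $q\geq 4$ and, for $q\in\{2,3\}$, its abelianization is cyclic of order $p:=\mathrm{char}(\mathbb{F}_q)$ while $|\mathbb{F}_{q^{2n}}^*|=q^{2n}-1$ is coprime to $p$.

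For the group structure, $SL(2,q)\cap H=\{1\}$ because $\sigma_\eta$ fixes $x$ and the $SU(2,q)$-relations then force any such $M$ to be $I$. A direct conjugation computation shows that $\sigma_\eta M\sigma_\eta^{-1}$ acts via the matrix $\left(\begin{smallmatrix}a&b\eta^{-m}\\c\eta^m&d\end{smallmatrix}\right)$, whose $SU(2,q)$-relations follow from those of $M$ precisely because $(\eta^m)^{q+1}=\eta^{q^n+1}=1$. Thus $SL(2,q)\triangleleft G:=\langle SL(2,q),H\rangle$ and $G\cong SL(2,q)\rtimes C_{q^n+1}$, of order $q(q^2-1)(q^n+1)$. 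The subgroup $C_m\subseteq H$ of those $\eta$ with $\eta^m=1$ acts trivially on $(x,y)$ and so commutes with $SL(2,q)$, giving $S\cong SL(2,q)\times C_m$.

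For the fixed fields, $z^{q^n+1}$ and $z^m$ are manifestly $G$- and $S$-invariant respectively. With $h(x):=(x^{q^2}-x)/(x^{q+1}-1)$, the identity $z^{q^n+1}=v^{q+1}=(x^{q+1}-1)h(x)^{q+1}$ exhibits $z^{q^n+1}$ as a polynomial in $x$ of degree $q(q^2-1)$, so $[\K(x):\K(z^{q^n+1})]=q(q^2-1)$; combined with $[\K(x,y,z):\K(x)]=q^n+1$ this yields $[\K(x,y,z):\K(z^{q^n+1})]=|G|$, and Artin's theorem applied to the faithful $G$-action forces $Fix(G)=\K(z^{q^n+1})$. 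The analogous tower $\K(z^{q^n+1})\subset\K(v)\subset\K(x,y)\subset\K(x,y,z)$, with $[\K(v):\K(z^{q^n+1})]=q+1$ and $[\K(x,y):\K(v)]=q(q^2-1)$ (from $\K(v)(x)=\K(x,y)$), gives $[\K(x,y,z):\K(z^m)]=|S|$ and hence $Fix(S)=\K(z^m)$. The main obstacle I anticipate is the invariance of $v$: the divisor argument readily shows $v$ is a semi-invariant, but pinning down the character $\chi$ as trivial requires the characteristic-coprimality argument in the small cases $q\in\{2,3\}$ where $SL(2,q)$ fails to be perfect.
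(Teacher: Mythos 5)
Your proof is correct and establishes everything the theorem claims, but it handles the central step by a genuinely different argument than the paper. The paper works with the full non-tangent-line stabilizer $M_\ell$ in the parametrized form $b=\zeta c^q$, $d=\zeta a^q$ with $\zeta^{q+1}=1$, and verifies by a direct algebraic manipulation that $\alpha_{a,b,c,d}(u)=\zeta u$ for $u=y(x^{q^2}-x)/(x^{q+1}-1)$; each element of $M_\ell$ is then lifted in $m$ ways by choosing $\xi$ with $\xi^m=\zeta$. You instead assemble $G$ from two explicit pieces --- the determinant-one subgroup lifted with $z$ fixed, and the cyclic group $\{\sigma_\eta\}\cong C_{q^n+1}$ --- and replace the explicit computation of $\alpha(u)/u$ by a softer one: the linear action preserves the divisor of $u$ (it permutes the $q^3-q$ affine $\mathbb{F}_{q^2}$-rational places, which are simple zeros, and the $q+1$ infinite places, which are poles of order $q^2-q$), so $u$ is a semi-invariant, and the resulting character into $\K^*$ is trivial because $SL(2,q)$ is perfect for $q\ge 4$ while for $q\in\{2,3\}$ its abelianization is a $p$-group and $\K^*$ has no $p$-torsion. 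This buys you independence from the matrix identities at the cost of a small case split; the paper's computation, by contrast, also identifies the exact multiplier $\zeta$ for every element of $M_\ell$, which is what makes its lifting recipe $\xi^m=\zeta$ transparent. Your degree counts for the fixed fields (through the tower $\K(z^{q^n+1})\subset\K(x)\subset\K(x,y,z)$) are equivalent to the paper's count via the pole divisor of $u$. One cosmetic remark: killing both cross terms in $(ax+by)^{q+1}-(cx+dy)^{q+1}$ requires $a^qb=c^qd$ in addition to $ab^q=cd^q$; since the entries lie in $\mathbb{F}_{q^2}$ these two relations are Frobenius conjugates of one another, so nothing is lost.
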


\begin{proof}
The Hermitian curve $\mathcal{H}_q$ defined by the affine equation $Y^{q+1}=X^{q+1}-1$ has, as we have noted already, $q+1$ points at infinity. These points lie on a non-tangent line $\ell$. From \cite{MZ} the stabilizer $M_\ell$ of a non-tangent line $\ell$ is a maximal subgroup of $Aut(\mathcal{H}_q)$ of order $q(q-1)(q+1)^2$. Moreover, $M_\ell$ is isomorphic to $S_\ell \rtimes C_{q+1}$ where $C_{q+1}$ is a cyclic group of order $q+1$, and $S_\ell$ is the commutator subgroup of $M_\ell$, which in turn is isomorphic to $SL(2,q)$. An explicit description of $M_\ell$ and $S_\ell$ is given in \cite{MZ} in matrix representation as follows:
$$M_\ell=\Bigg\{ \begin{pmatrix} a & b & 0 \\ c & d & 0 \\ 0 & 0 & 1\end{pmatrix} : a,b,c,d \in \mathbb{F}_{q^2}, \  d^{q+1}-b^{q+1}=1, \ a^{q+1}-c^{q+1}=1, -a^qb+c^qd=0 \Bigg \},$$
$$S_\ell=\Bigg\{ \begin{pmatrix} a & b & 0 \\ c & d & 0 \\ 0 & 0 & 1\end{pmatrix} \in M_\ell : ad-bc=1 \Bigg\} \cong SL(2,q),$$
also see \cite[Sect. 3]{CKT} for more details.

We will identify matrices from $M_\ell$ with their associated projectivities and denote with $\alpha_{a,b,c,d}$ the projectivity associated to the matrix in $M_\ell$ with entries $a,b,c,d.$
Then for any $\alpha_{a,b,c,d} \in M_\ell$, we have that $c^qd=a^qb$,
$$1+b^{q+1}=d^{q+1}=\frac{a^{q(q+1)}b^{q+1}}{c^{q(q+1)}}=\frac{(1+c^{q+1})^qb^{q+1}}{c^{q(q+1)}}=\bigg(\frac{b}{c^q}\bigg)^{q+1}+b^{q+1},$$
and
$$d^{q+1}-1=b^{q+1}=\frac{c^{q(q+1)}d^{q+1}}{a^{q(q+1)}}=\frac{(-1+a^{q+1})^qd^{q+1}}{a^{q(q+1)}}=-\bigg(\frac{d}{a^q}\bigg)^{q+1}+d^{q+1}.$$
This means that for any $\alpha_{a,b,c,d} \in M_\ell$ there exists $\zeta \in \mathbb{F}_{q^2}$ such that $\zeta^{q+1}=1$, $b=\zeta c^q$, and $d=\zeta a^q.$ In particular, this implies that $b=c^q$ and $a=d^q$ for every $\alpha_{a,b,c,d} \in S_\ell.$ Hence we have shown that
\begin{equation}\label{eq:Ml}
M_\ell=\Bigg\{ \begin{pmatrix} a & \zeta c^q & 0 \\ c & \zeta a^q & 0 \\ 0 & 0 & 1\end{pmatrix} : a,c \in \mathbb{F}_{q^2}, \  a^{q+1}-c^{q+1}=1, \zeta^{q+1}=1 \Bigg \}
\end{equation}
and
\begin{equation}\label{eq:Sl}
S_\ell=\Bigg\{ \begin{pmatrix} a & c^q & 0 \\ c & a^q & 0 \\ 0 & 0 & 1\end{pmatrix} : a,c \in \mathbb{F}_{q^2}, \  a^{q+1}-c^{q+1}=1 \Bigg\}.
\end{equation}
As before, denote by $\K(x,y)$ the Hermitian funcion field over $\K$ where $y^{q+1}-x^{q+1}+1=0$.
We are going to analyze the induced action on $\K(x,y)$ of $\alpha_{a,b,c,d} \in M_\ell$, where $b=\zeta c^q$ and $d=\zeta a^q$. First of all, note that a direct computation gives that for any $\alpha_{a,b,c,d} \in M_\ell$ we have
$$\alpha_{a,b,c,d}(y)^{q+1}-\alpha_{a,b,c,d}(x)^{q+1}+1=y^{q+1}-x^{q+1}+1.$$
Hence $\alpha_{a,b,c,d}$ is an automorphism of the function field $\K(x,y)$ (as well as of the curve $\mathcal H_q$).
Now consider the function $u:=y(x^{q^2}-x)/(x^{q+1}-1)=(x^{q^2}-x)/y^q$ in $\K(x,y)$. Then we have
$$\alpha_{a,b,c,d}(u)=(cx+dy) \bigg( \frac{ax^{q^2}+by^{q^2}-ax-by}{(cx+dy)^{q+1}}\bigg)=\bigg( \frac{ax^{q^2}+by^{q^2}-ax-by}{c^qx^q+d^qy^q}\bigg).$$
Observe that \begin{align*}
(x^{q^2}-x)(c^qx^q+d^qy^q)&=c^qx^{q^2+q}+d^qx^{q^2}y^q-c^qx^{q+1}-d^qy^qx\\
&=c^q(y^{q+1}+1)^q+d^qx^{q^2}y^q-c^q(y^{q+1}+1)-d^qy^qx\\
&=c^qy^{q^2+q}+d^qx^{q^2}y^q-c^qy^{q+1}-d^qy^qx\\
&=y^q(c^qy^{q^2}+d^qx^{q^2}-c^qy-d^qx).
\end{align*}
Combining this with Equation \eqref{eq:Ml}, we see that
$$\frac{\alpha_{a,b,c,d}(u)}{u}=\frac{y^q(ax^{q^2}+by^{q^2}-ax-by)}{(x^{q^2}-x)(c^qx^q+d^qy^q)}=\frac{ax^{q^2}+by^{q^2}-ax-by}{c^qy^{q^2}+d^qx^{q^2}-c^qy-d^qx}=\zeta,$$
where in the last equality we used that $b=\zeta c^q$ and $a=a^{q^2}=d^q/\zeta^q=\zeta d^q.$ In particular Equation \eqref{eq:Sl} now implies that $\alpha_{a,b,c,d}(u)=u$ if and only if $\alpha_{a,b,c,d} \in S_\ell$. Hence $u \in Fix(S_\ell)$ and $u^{q+1} \in Fix(M_\ell)$. On the other hand from the proof of Lemma \ref{genus}, we see that the pole divisor of $u$ has degree $q^3-q=|S_\ell|$. Hence $[\K(x,y):\K(u)]=|S_\ell|$ and $[\K(x,y):\K(u^{q+1})]=(q+1)|S_\ell|=|M_\ell|$. This implies that $\K(u)=Fix(S_\ell)$ and $\K(u^{q+1})=Fix(M_\ell)$.

Now let $\alpha_{a,b,c,d} \in M_\ell$, where $b=\zeta c^q$ and $d=\zeta a^q$. Since in $\K(x,y,z)$ we have that $z^m=u$ and $\alpha_{a,b,c,d}(u)=\zeta u$, for any $\alpha_{a,b,c,d} \in M_\ell$ and $\xi \in \K$ such that $\xi^m=\zeta$, the transformation $\alpha_{a,b,c,d,\xi}: (x,y,z) \mapsto (ax+by,cy+dy,\xi z)$ is an automorphism of $\K(x,y,z)$. Note that since $\zeta^{q+1}=1$, we have $\xi^{q^n+1}=1,$ implying that indeed $\xi \in \mathbb{F}_{q^{2n}}.$ Hence the groups of transformations
\begin{equation}\label{eq:groupG}
G:=\{\alpha_{a,\zeta c^q,c,\zeta a^q,\xi}: \alpha_{a,\zeta c^q,c,\zeta a^q} \in M_\ell, \ \xi^m=\zeta \}
\end{equation}
and
\begin{equation}\label{eq:groupS}
S:=\{\alpha_{a,c^q,c,a^q,\xi}: \alpha_{a,c^q,c,a^q} \in S_\ell, \ \xi^m=1 \}
\end{equation}
are automorphism groups of $\mathcal{X}_n$ with
$G \cong SL(2,q) \rtimes C_{q^n+1}$ and $S \cong S_\ell \times C_m \cong SL(2,q) \times C_m$. Further $Fix(G)=Fix(M_\ell)=\K(u^{q+1})=\K(z^{q^n+1})$ and $Fix(S)=Fix(S_\ell)=\K(u)=\K(z^m)$, since $[\K(x,y,z):K(z^{q^n+1})]=|M_\ell|m=|G|$ and $[\K(x,y,z):K(z^m)]=|S_\ell|m=|S|$.
\end{proof}

\begin{corollary} \label{notisom}
Let $q$ be a prime power and $n \geq 3$ odd. Then $\mathcal{X}_n$ is isomorphic to $\mathcal{C}_n$ if and only if $n =3$.
\end{corollary}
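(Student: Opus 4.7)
The ``if'' direction is given by Remark \ref{noteq}, which exhibits $\mathcal X_3 \cong \mathcal C_3$. For the converse, fix $n \ge 5$ odd. The case $q=2$ was handled just before this corollary using Lemma \ref{q2} (together with the fact that the analogous Galois-subcover property fails for $\mathcal C_n$ by \cite{GMZ}), so I concentrate on $q \ge 3$ and argue by contradiction: suppose $\mathcal X_n \cong \mathcal C_n$. Then $\Aut(\mathcal X_n) \cong \Aut(\mathcal C_n)$, and by Theorem \ref{lift} the latter group must contain a copy of $SL(2,q)$. My plan is to rule this out via the known structure of $\Aut(\mathcal C_n)$.

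By \cite{GOS,GMP}, recalled in the introduction, one has the short exact sequence
$$1 \longrightarrow C_m \longrightarrow \Aut(\mathcal C_n) \xrightarrow{\ \pi\ } \mathrm{Stab}_{PGU(3,q)}(Q_\infty) \longrightarrow 1.$$
Consider the intersection $SL(2,q) \cap C_m$: it is a normal cyclic subgroup of $SL(2,q)$, hence contained in the centre once $q \ge 3$. The centre vanishes in characteristic $2$, while for $q$ odd the identity $v_2(q^n+1) = v_2(q+1)$ (obtained from lifting the exponent for $n$ odd) gives that $m$ is odd, so the centre $\{\pm I\}$ still cannot embed in $C_m$. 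In every case $SL(2,q) \cap C_m = 1$, and therefore $\pi$ embeds $SL(2,q)$ into $\mathrm{Stab}_{PGU(3,q)}(Q_\infty)$.

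To close the argument, I would invoke the classical fact (see e.g.\ \cite[Theorem A.10]{HKT}) that the one-point stabilizer $\mathrm{Stab}_{PGU(3,q)}(Q_\infty)$ has a unique (normal) Sylow $p$-subgroup $U$ of order $q^3$, namely its unipotent radical. Every Sylow $p$-subgroup of the embedded $SL(2,q)$ then lies in $U$, and since $SL(2,q)$ is generated by its upper and lower unitriangular subgroups, which are themselves Sylow $p$-subgroups, this forces $SL(2,q) \subseteq U$. This is impossible, because $|SL(2,q)| = q(q^2-1)$ is not a power of $p$ for $q \ge 3$ (the factor $q^2-1 \ge 8$ has prime divisors distinct from $p$), while $U$ is a $p$-group. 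The step I expect to require the most care is the combined centrality-plus-parity analysis of $SL(2,q) \cap C_m$; once this is pinned down, the contradiction is a one-line Sylow argument and the non-isomorphism follows.
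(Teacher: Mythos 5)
Your proof is correct, but it takes a genuinely different and considerably heavier route than the paper. The paper's own argument is a two-line order count: by Theorem \ref{lift}, $\Aut(\mathcal X_n)$ contains the subgroup $G$ of order $q(q-1)(q+1)(q^n+1)$, while \cite{GOS,GMP} give $|\Aut(\mathcal C_n)|=q^3(q-1)(q^n+1)$; since $q+1\nmid q^2$, Lagrange's theorem already forbids an isomorphism, uniformly for all $q\ge 2$. You instead use only the \emph{qualitative} structure of $\Aut(\mathcal C_n)$ as an extension $1\to C_m\to \Aut(\mathcal C_n)\to \mathrm{Stab}_{PGU(3,q)}(Q_\infty)\to 1$, not its exact order, and show that $SL(2,q)$ cannot embed: the centrality-plus-parity analysis of $SL(2,q)\cap C_m$ is sound (the restriction to $q\ge 3$ is genuinely needed there, since $SL(2,2)\cong S_3$ has a non-central cyclic normal subgroup, and the $q=3$ case works because the non-central proper normal subgroup $Q_8$ of $SL(2,3)$ is not cyclic), the oddness of $m$ for $q$ odd and $n$ odd is correct, and the Sylow argument is necessary and valid --- note that $|SL(2,q)|=q(q^2-1)$ \emph{does} divide $q^3(q^2-1)$, so an order count alone could not finish your version, whereas generation of $SL(2,q)$ by its two unitriangular Sylow $p$-subgroups together with the uniqueness of the normal Sylow $p$-subgroup $U$ of the point stabilizer does. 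What your approach buys is robustness (it would survive even if only the structure, not the precise order, of $\Aut(\mathcal C_n)$ were known); what it costs is several nontrivial group-theoretic inputs that the paper's divisibility argument avoids entirely. If you keep your version, you may as well drop the separate appeal to Lemma \ref{q2} for $q=2$ at the level of the Sylow step, since $SL(2,2)$ is still not a $2$-group, though you would then need a substitute for the centrality step in that case.
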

\begin{proof}
We already know that $\mathcal C_3$ and $\mathcal X_3$ are isomorphic. Now assume that $n \geq 5$. From Theorem \ref{lift}, we have that $q(q-1)(q+1)(q^n+1)$ divides $|Aut(\mathcal{X}_n)|$ while from \cite{GOS,GMP}, we know that $|\aut(\mathcal{C}_n)|=q^3(q-1)(q^n+1)=q^3(q-1)(q^n+1)$. Since $q(q-1)(q+1)(q^n+1)$  does not divide $q^3(q-1)(q^n+1)$ we conclude that $\mathcal{X}_n$ is not isomorphic to $\mathcal{C}_n$.
\end{proof}

\begin{remark}\label{rem:Galoisgroups}
The proof of Theorem \ref{lift} directly implies several facts on certain function field extensions and their corresponding Galois groups: the field extension $\K(x,y,z)/\K(z^{q^n+1})$ is Galois with Galois group isomorphic to the group $G$ given in Equation \eqref{eq:groupG}. Similarly, the extension $\K(x,y,z)/\K(z^{m})$ is Galois with Galois group isomorphic to the group $S$ given in Equation \eqref{eq:groupS}. In the next section, we will also use the following subgroup of $S$:
\begin{equation}\label{eq:groupSt}
\tilde{S}:=\{\alpha_{a,c^q,c,a^q,1}: \alpha_{a,c^q,c,a^q} \in S_\ell\},
\end{equation}
 which is directly seen to be the Galois group of the Galois extension $\K(x,y,z)/\K(z)$. Indeed any element from $\tilde{S}$ fixes $z$, while $\tilde{S}$ has cardinality $|SL(2,q)|=q^3-q=[\K(x,y,z):\K(z)]$, since it is isomorphic to the group $S_\ell$ from Equation \eqref{eq:Sl}, which in turn was isomorphic to $SL(2,q).$  Finally note that the Theorem \ref{lift} directly implies that group $S_\ell$ is the Galois group of the Galois extension $\K(x,y)/\K(u)$, with $u=y(x^{q^2}-x)/(x^{q+1}-1).$
\end{remark}

\section{$\mathcal{X}_n$ is $\mathbb{F}_{q^{2n}}$-maximal for every $n \geq 5$ odd}

In this section we prove that the algebraic curves $\mathcal X_n$ are $\K$-maximal for any $q$ and $n \ge 3$ odd. The proof proceeds in several steps, proving the maximality of certain key subfields of $\K(x,y,z)$ first, before showing the maximality of $\K(x,y,z)$ itself. An overview of the various function fields is given in Figure \ref{fig:one}. Some of the stated extension degrees and equations may not be clear at this point, but will be explained later in this section.
\begin{figure}[h!]
\begin{center}
\resizebox{\textwidth}{!}{
\makebox[\width][c]{
\def\objectstyle{\scriptstyle}
\xymatrix@!=1.45pc@r{
&&&& \K(x,y,z)&&z^m=y\frac{x^{q^2}-x}{x^{q+1}-1}\\
y^{q+1}=x^{q+1}-1&& \K(x,y)\ar@{-}[rru]^{m} \\
\left(\frac{x}{\rho}\right)^q+\left(\frac{x}{\rho}\right)=\frac{\rho^{q+1}+1}{\rho^{q+1}}&&&& \K(\rho,z)\ar@{-}_{q}[uu]&&z^m=\frac{\rho^{q^2-1}-1}{\rho^{q-1}}\\
\rho=x+y & & \K(\rho)\ar@{-}[uu]^{q}\ar@{-}[rru]^{m}&&&&\rho^{q-1}=\frac{1}{z^m(\eta-1)}=\frac{\delta-1}{z^m}\\
\delta=\rho^{q^2-1}&&&&\K(\eta,z)\ar@{-}[uu]_{q-1}&&\left(\frac{1}{z}\right)^{q^n+1}=\eta^{q+1}-\eta\\
\eta=\frac{\delta}{\delta-1}&&\K(\delta)=\K(\eta)\ar@{-}^{q^2-1}[uu]\ar@{-}^{q^n+1}[rru]
}
}
}
\end{center}
\caption{An overview of several subfields of $\K(x,y,z)$.\label{fig:one}}
\end{figure}
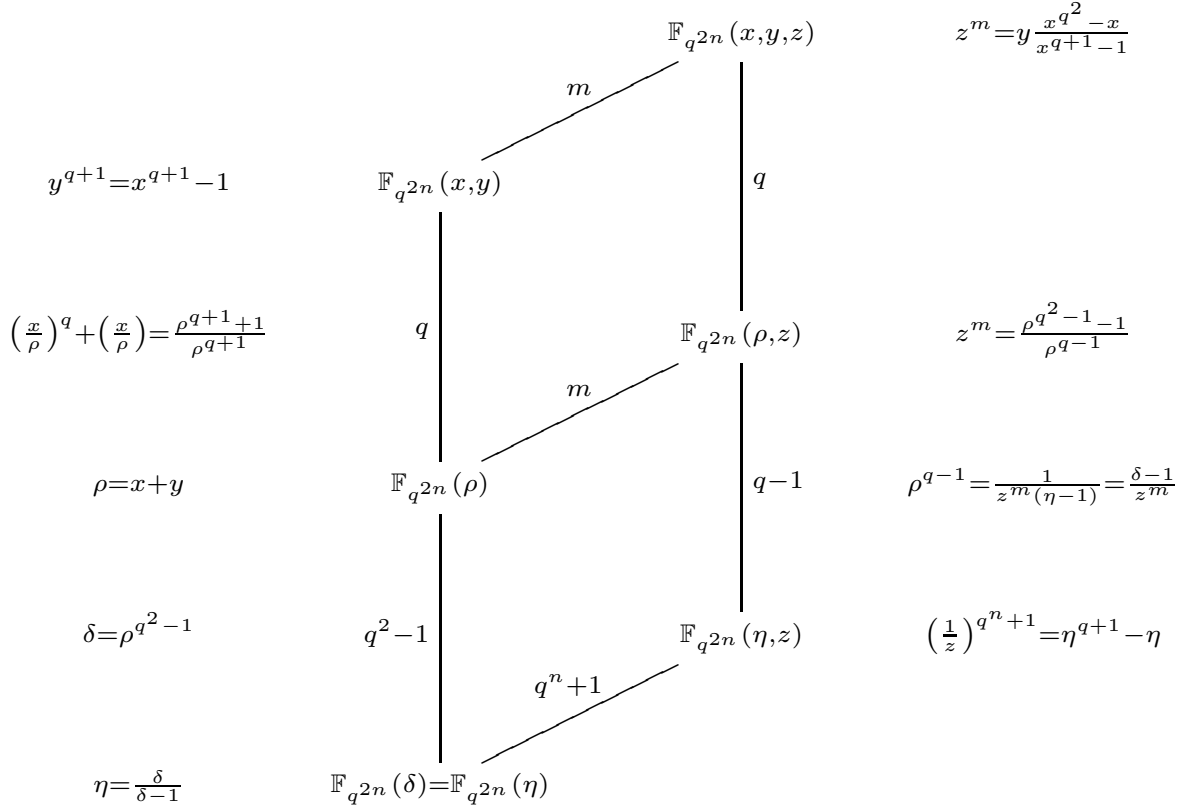

\begin{lemma}
Let $\K(x,y)$ be as before defined by $y^{q+1}-x^{q+1}+1=0$ and define $\rho:=x+y,$ $\delta:=\rho^{q^2-1}.$ Then $\K(x,y)/\K(\rho)$ is an Artin--Schreier extension of degree $q$ and $\K(\rho)/\K(\delta)$ is a Kummer extension of degree $q^2-1$.
\end{lemma}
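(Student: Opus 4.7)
My plan is to split the two claims and handle each with an explicit generator.

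For the Artin--Schreier claim, I would set $w:=x/\rho$, so that $x = w\rho$, $y = (1-w)\rho$, and $\K(x,y) = \K(\rho,w)$. Substituting into $y^{q+1} - x^{q+1} + 1 = 0$ and using $(1-w)^{q+1} = 1 - w - w^q + w^{q+1}$ (valid in characteristic $p$), the $w^{q+1}$ terms cancel and a short simplification yields
$$w^q + w = \frac{\rho^{q+1}+1}{\rho^{q+1}},$$
which is exactly the Artin--Schreier-type relation recorded in Figure~\ref{fig:one}. This forces $[\K(x,y):\K(\rho)] \le q$. To see the degree is exactly $q$, I would compute the degree of the pole divisor of $\rho$ in $\K(x,y)$. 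Starting from the factorization $\prod_{i=1}^{q+1}(y-\alpha_i x) = y^{q+1}-x^{q+1} = -1$, where $\alpha_i$ runs over the $(q+1)$-th roots of unity in $\fqq$, and using $v_{P_\infty^i}(x) = v_{P_\infty^i}(y) = -1$ at every infinite place, one obtains $v_{P_\infty^i}(y - \alpha_i x) = q$ and $v_{P_\infty^j}(y - \alpha_i x) = -1$ for $j \ne i$. Taking $\alpha_{i_0} = -1$ (always a $(q+1)$-th root of unity), we have $\rho = y - \alpha_{i_0} x$, so $\rho$ has a zero of order $q$ at $P_\infty^{i_0}$ and a simple pole at each of the remaining $q$ infinite places; it has no affine zeros, since $x = -y$ would force $-1 = 0$ on $\mathcal H_q$. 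Hence the pole divisor of $\rho$ has degree $q$, giving $[\K(x,y):\K(\rho)] = q$ and showing that $T^q + T - (\rho^{q+1}+1)/\rho^{q+1}$ is the minimal polynomial of $w$ over $\K(\rho)$.

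For the Kummer claim, it suffices to note that, by the previous paragraph, $\rho$ is transcendental over $\K$, so $\K(\rho)$ is a rational function field. For any positive integer $k$, $[\K(\rho):\K(\rho^k)] = k$, and for $k = q^2 - 1$ the extension $\K(\rho)/\K(\delta)$ is generated by $\rho$, a root of $T^{q^2-1} - \delta$. Since $\gcd(q^2-1, p) = 1$ and $\K \supseteq \fqq$ contains all $(q^2-1)$-th roots of unity, this is a Kummer extension of degree $q^2-1$, as claimed.

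The main technical step is the pole-order computation of $\rho$ at the infinite places of $\K(x,y)$; the decisive input is the identity $\prod(y - \alpha_i x) = -1$, which via a one-line valuation argument pins down $v_{P_\infty^i}(y-\alpha_i x) = q$ and hence the exact shape of the pole divisor of $\rho$. Everything else is routine algebraic manipulation and standard facts about Artin--Schreier and Kummer extensions.
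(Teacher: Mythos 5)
Your proof is correct, and its overall architecture matches the paper's: both arguments establish $[\K(x,y):\K(\rho)]=q$ by computing the divisor of $\rho$, then exhibit the relation $(x/\rho)^q+(x/\rho)=(\rho^{q+1}+1)/\rho^{q+1}$, and both treat the Kummer claim as routine. The genuine difference is in how the divisor of $\rho$ is pinned down. The paper argues geometrically: it invokes the reciprocity law of polarities to identify $X+Y=0$ as the tangent line to $\mathcal H_q$ at $(1:-1:0)$, so that this line meets the curve there with intersection multiplicity $q+1$, giving $(\rho)_{\K(x,y)}=qP_\infty^1-\sum_{i=2}^{q+1}P_\infty^i$. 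You instead use the purely algebraic identity $\prod_i(y-\alpha_i x)=y^{q+1}-x^{q+1}=-1$ together with $v_{P_\infty^j}(x)=v_{P_\infty^j}(y)=-1$ to force $v_{P_\infty^i}(y-\alpha_i x)=q$ and $v_{P_\infty^j}(y-\alpha_i x)=-1$ for $j\neq i$; specializing to $\alpha_{i_0}=-1$ recovers exactly the same divisor. Your route avoids any appeal to polarities, applies uniformly to all $q+1$ linear factors rather than just $x+y$, and is somewhat more self-contained; the paper's is shorter if one is willing to quote the geometry of the Hermitian curve. Your derivation of the Artin--Schreier equation by substituting $x=w\rho$, $y=(1-w)\rho$ is a mild variant of the paper's expansion of $\rho^{q+1}=x^q\rho+x\rho^q-1$ and lands on the identical relation, and your closing remarks on irreducibility of $T^q+T-(\rho^{q+1}+1)/\rho^{q+1}$ and on the roots of unity needed for the Kummer step are accurate.
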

\begin{proof}
The statement that $\K(\rho)/\K(\delta)$ is a Kummer extension of degree $q^2-1$ is trivial and we therefore only prove the first statement. As in the previous section denote by $P_{\infty}^i$, $i=1,\ldots,q+1$, the places of $\K(x,y)$ corresponding to the ``infinite'' points $Q_{\infty}^i=(1:\alpha_i:0)$ on the algebraic curve $Y^{q+1}-X^{q+1}+1=0.$ We make the convention that $Q_\infty^1=(1:-1:0)$. Since $Q_{\infty}^1$ is a point of the polar line of $(0:0:1)$ at $\mathcal{H}_q$, the Reciprocity Law of polarities yields that $(0:0:1)$ is a point of the polar line of $Q_{\infty}^1$ which is the tangent line of $\mathcal{H}_q$ at $Q_{\infty}^1$. Thus, the line $X+Y=0$ intersects $\mathcal{H}_q$ at $Q_{\infty}^1$ with multiplicity $q+1$. Hence
\begin{equation}\label{div:rho}
(\rho)_{\K(x,y)}=(x+y)_{\K(x,y)}=(q+1)P_\infty^1-\sum_{i=1}^{q+1}P_\infty^i=qP_\infty^1-\sum_{i=2}^{q+1}P_\infty^i.
\end{equation}
This shows that $[\K(x,y):\K(\rho)]=q.$
To find an algebraic relation between $\rho$ and $x$, note that $\rho^{q+1}=x^{q+1}+y^{q+1}+x^qy+xy^q=2x^{q+1}-1+x^qy+xy^q=x^q \rho + x \rho^q-1$, and hence
\begin{equation} \label{eqpol}
\bigg(\frac{x}{\rho}\bigg)^q + \bigg(\frac{x}{\rho} \bigg)=\frac{\rho^{q+1}+1}{\rho^{q+1}}.
\end{equation}
This implies that the extension $\K(x,y)/\K(\rho)$ is an Artin--Schreier extension.
\end{proof}

\begin{remark}
It is not hard to describe the Galois group of the extension $\K(x,y)/\K(\rho)$ as a subgroup of the group $M_\ell$ from Equation \eqref{eq:Ml}. Indeed, from Equation \eqref{eqpol} we see that an automorphisms $\sigma$ of $\K(x,y)/\K(\rho)$ sends $x/\rho$ to $x/\rho+\alpha$ with $\alpha$ satisfying $\alpha^q+\alpha=0$. In terms of $x$ and $y$, we obtain $$\sigma(x)=\rho\cdot \left(\frac{x}{\rho}+\alpha\right)=(\alpha+1)x+\alpha y \ \makebox{ and } \ \sigma(y)=\rho-\sigma(x)=-\alpha x+(1-\alpha)y.$$
Hence the group of such automorphisms $\sigma$ can be identified with the Sylow $p$-subgroup of $S_\ell$ given by
$$S_{\ell,q}=\Bigg\{ \begin{pmatrix} 1+\alpha & \alpha & 0 \\ -\alpha & 1-\alpha & 0 \\ 0 & 0 & 1\end{pmatrix} : \alpha^q+\alpha=0 \Bigg\}.$$
\end{remark}

We now turn our attention to the function field $\K(\eta,z)$ from Figure \ref{fig:one}, where $\eta:=\delta/(\delta-1)$. Note that $\delta=\eta/(\eta-1)$, implying that $\K(\eta)=\K(\delta).$ First we determine the genus of $\K(\eta,z)$.
\begin{lemma}\label{lem:genusrhoz}
Let $\delta:=\rho^{q^2-1}$ and $\eta:=\delta/(\delta-1)$. Then the extension $\K(\eta,z)/\K(\eta)$ is a Kummer extension of degree $q^n+1$. The function field $\K(\eta,z)$ has genus $g(\K(\eta,z))=\frac12(m-1)(q+1).$
\end{lemma}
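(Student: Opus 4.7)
The plan is to derive the explicit algebraic relation $z^{q^n+1}(\eta^{q+1}-\eta)=1$ suggested by Figure~\ref{fig:one}, use it to identify $\K(\eta,z)/\K(\eta)$ as a Kummer extension of full degree $q^n+1$, and then apply Riemann--Hurwitz to compute the genus.

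For the Kummer relation, I would start from the defining equation $z^m = y(x^{q^2}-x)/(x^{q+1}-1)$ of $\mathcal{X}_n$ and use $y^{q+1}=x^{q+1}-1$ to rewrite it as $z^m = (x^{q^2}-x)/y^q$. The key intermediate step is to verify
$$z^m=\frac{x^{q^2}-x}{y^q}=\frac{\rho^{q^2-1}-1}{\rho^{q-1}}=\frac{\delta-1}{\rho^{q-1}}.$$
After clearing denominators, this reduces to the identity $(x^{q^2}-x)\,x^q = y^q(y^{q^2}-y)$, which follows directly from raising the Hermitian equation to the $q$-th power. Then, raising $\rho^{q-1}=(\delta-1)/z^m$ to the $(q+1)$-th power and using $\rho^{q^2-1}=\delta$ together with $m(q+1)=q^n+1$ yields $\delta\cdot z^{q^n+1}=(\delta-1)^{q+1}$. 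Substituting $\delta=\eta/(\eta-1)$ and $\delta-1=1/(\eta-1)$ and simplifying gives
$$z^{q^n+1}=\frac{1}{\eta(\eta-1)^q}=\frac{1}{\eta^{q+1}-\eta}.$$

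To confirm that $\K(\eta,z)/\K(\eta)$ is a Kummer extension of the full degree $q^n+1$, note that $(q^n+1)\mid(q^{2n}-1)$, so $\K$ contains a primitive $(q^n+1)$-th root of unity, and clearly $\gcd(q^n+1,p)=1$. By \cite[Proposition~3.7.3]{Sti}, it then suffices to show that $\eta(\eta-1)^q$ is not a nontrivial $d$-th power in $\K(\eta)$ for any $d\mid q^n+1$ with $d>1$. Its principal divisor in $\K(\eta)$ is
$$\bigl(\eta(\eta-1)^q\bigr)_{\K(\eta)}=P_0+qP_1-(q+1)P_\infty,$$
where $P_0,P_1,P_\infty$ denote the places of $\K(\eta)$ at $\eta=0,1,\infty$; the simple zero at $P_0$ immediately forces $d=1$.

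Finally, for the genus I would apply Riemann--Hurwitz to this Kummer extension. Exactly $P_0,P_1,P_\infty$ ramify. Since $\gcd(q^n+1,1)=\gcd(q^n+1,q)=1$, both $P_0$ and $P_1$ are totally ramified, each contributing $q^n$ to the different degree. Since $n$ is odd, $(q+1)\mid(q^n+1)$ and $\gcd(q^n+1,q+1)=q+1$, so $P_\infty$ splits into $q+1$ places of $\K(\eta,z)$, each with ramification index $m$, contributing $(q+1)(m-1)$ in total. Using $g(\K(\eta))=0$, Riemann--Hurwitz yields
$$2g(\K(\eta,z))-2=-2(q^n+1)+2q^n+(q+1)(m-1)=-2+(q+1)(m-1),$$
so $g(\K(\eta,z))=(m-1)(q+1)/2$. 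I expect the main obstacle to be the explicit algebraic computation in the first step, namely verifying $z^m=(\delta-1)/\rho^{q-1}$; once this is established, the rest is a routine application of Kummer theory and Riemann--Hurwitz.
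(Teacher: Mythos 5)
Your proof is correct and follows essentially the same route as the paper's: derive $z^m=(\rho^{q^2-1}-1)/\rho^{q-1}$ from the Hermitian relation, pass to $(1/z)^{q^n+1}=\eta^{q+1}-\eta$, and apply Kummer theory and Riemann--Hurwitz with exactly the same ramification data ($\eta=0$ and $\eta=1$ totally ramified, $q+1$ places over $\eta=\infty$ with index $m$). Your explicit verification that $\eta^{q+1}-\eta=\eta(\eta-1)^q$ is not a proper $d$-th power, via the simple zero at $\eta=0$, is a detail the paper leaves implicit.
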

\begin{proof}
We will start the proof by finding an algebraic relation between $z$ and $\delta$. First of all, note that since $\rho=x+y$ and $y^{q+1}=x^{q+1}-1$, we have that
\begin{align*}
y^{q+1}(\rho^{q^2}-\rho)& = y^{q+1}(x^{q^2}-x)+(x^{q+1}-1)(y^{q^2}-y)\\
 &=y^{q+1}(x^{q^2}-x)+(x^{q+1}-1)y((y^{q+1})^{q-1}-1)\\
 &=y^{q+1}(x^{q^2}-x)+(x^{q+1}-1)y((x^{q+1}-1)^{q-1}-1)\\
 &=y^{q+1}(x^{q^2}-x)+y(x^{q^2+q}-x^{q+1})=\rho^q y (x^{q^2}-x).\\
\end{align*}
Hence
\begin{equation}\label{eq:zrho}
z^m=\frac{y(x^{q^2}-x)}{x^{q+1}-1}=\frac{x^{q^2}-x}{y^q}=\frac{\rho^{q^2}-\rho}{\rho^q}=\frac{\rho^{q^2-1}-1}{\rho^{q-1}}.
\end{equation}
Since $\delta=\rho^{q^2-1},$ taking the $q+1$-th power on both sides of the above equation shows that
\begin{equation*}
z^{q^n+1}=\frac{(\delta-1)^{q+1}}{\delta}.
\end{equation*}
Hence in terms of $\eta=\delta/(\delta-1)$, we obtain that
\begin{equation}\label{eq:etaz}
\left(\frac{1}{z}\right)^{q^n+1}=\frac{\delta}{(\delta-1)^{q+1}}=\eta^{q+1}-\eta.
\end{equation}
This shows that $\K(\eta,z)/\K(\eta)$ is a Kummer extension of degree $q^n+1$.
The genus now follows directly from the theory of Kummer extensions: the two zeroes of $\eta^{q+1}-\eta=\eta(\eta-1)^q$ are totally ramified, while there are $q+1$ places of $\K(\eta,z)$ above the pole of $\eta^{q+1}-\eta$, each with ramification index $m$, since the pole order $q+1$ divides the extension degree $q^n+1$. Applying the Riemann-Hurwitz genus formula to the extension $\K(\eta,z)/\K(\eta)$ we obtain
$$2g(\K(\eta,z))-2=(q^n+1)(-2)+2q^n+(q+1)(m-1).$$ The desired result now follows directly.
\end{proof}

We now prove maximality of the function field $\K(\eta,z)$. The essential ingredient of the proof is to use a Galois-descent-type argument by interpreting $\K$ as a $2$-dimensional space over $\mathbb{F}_{q^n}.$

\begin{proposition} \label{etazismax}
Let $q$ be a prime power and $n \geq 3$ odd. The function field $\K(\eta,z)$ where $z^{q^n+1}=\eta^{q+1}-\eta,$ is $\mathbb{F}_{q^{2n}}$-maximal.
\end{proposition}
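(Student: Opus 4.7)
The plan is to exhibit an explicit $\mathbb{F}_{q^{2n}}$-rational dominant cover of $\K(\eta,z)$ by the Hermitian function field, and then invoke Serre's covering theorem (\cite{L1987}, as cited in the introduction), rather than carry out a direct point count or a Galois-descent calculation.

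Concretely, I would work with the Hermitian curve $\cH_{q^n}\colon Y^{q^n+1} = X^{q^n+1} - 1$ over $\K$, which is $\mathbb{F}_{q^{2n}}$-maximal, and look for rational expressions in $X,Y$ that satisfy the prescribed relation $z^{q^n+1} = \eta^{q+1} - \eta$. Factoring the right hand side as $\eta(\eta^q-1)$ and exploiting the Frobenius identity $(A-1)^q = A^q - 1$, valid in characteristic $p \mid q$, suggests the ansatz $\eta := X^{q^n+1}$ and $z := XY^q$. One then checks on $\K(X,Y)$ that
\[
z^{q^n+1} = X^{q^n+1}\,Y^{q(q^n+1)} = \eta\,(Y^{q^n+1})^q = \eta(X^{q^n+1}-1)^q = \eta(\eta^q - 1) = \eta^{q+1} - \eta,
\]
so $\eta$ and $z$ do satisfy the defining equation of the curve in the statement.

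Since $\eta$ is transcendental over $\K$, both $\K(\eta,z)$ and $\K(X,Y)$ have transcendence degree one, so the resulting inclusion $\K(\eta,z) \hookrightarrow \K(X,Y)$ is a finite field extension. It corresponds to an $\mathbb{F}_{q^{2n}}$-rational dominant morphism from $\cH_{q^n}$ onto the smooth projective model of $z^{q^n+1} = \eta^{q+1} - \eta$. Because $\cH_{q^n}$ is $\mathbb{F}_{q^{2n}}$-maximal, Serre's theorem then forces $\K(\eta,z)$ to be $\mathbb{F}_{q^{2n}}$-maximal as well.

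The only real obstacle in this route is identifying the correct cover; once the ansatz for $\eta$ and $z$ is in hand, the whole argument collapses to the single Frobenius computation displayed above. I would view this as a cleaner alternative to the Galois-descent-type argument suggested by the authors, trading it for the standard maximality of the Hermitian curve $\cH_{q^n}$.
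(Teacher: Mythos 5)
Your computation is correct and your argument does prove the proposition, but by a genuinely different route from the paper. You realize $\K(\eta,z)$ as a subcover of the Hermitian function field $\K(X,Y)$ of $\cH_{q^n}$ via $\eta=X^{q^n+1}$, $z=XY^q$: the identity $z^{q^n+1}=\eta\,(X^{q^n+1}-1)^q=\eta(\eta^q-1)=\eta^{q+1}-\eta$ is valid in characteristic $p$, the resulting inclusion $\K(\eta,z)\hookrightarrow \K(X,Y)$ is a degree-$(q^n+1)$ extension of function fields with full constant field $\K$ (the identification of the subfield generated by $X^{q^n+1}$ and $XY^q$ with the function field in the statement rests on the irreducibility of the Kummer equation, which Lemma \ref{lem:genusrhoz} already supplies), and the covering result of \cite{L1987} then yields maximality. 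The paper instead performs a direct point count: it views $\K$ as a two-dimensional space over $\mathbb{F}_{q^n}$, writes $\eta=\eta_1+\alpha\eta_2$, and determines exactly which places of $\K(\eta)$ split completely in $\K(\eta,z)/\K(\eta)$. The trade-off is that the paper's longer computation is not wasted effort: its output is the explicit parametrization of the splitting places recorded in Corollary \ref{cor:splitting} (the values $\delta(P)=(1+\alpha t^q)/(t^{q-1}(1+\alpha t))$), which is the essential input to Proposition \ref{prop:splittingtorho}, Corollary \ref{ourABQmax}, and ultimately the proof of Theorem \ref{thm:maximal}. Your cover proves Proposition \ref{etazismax} more cheaply, and as a side remark shows that this particular quotient of $\cX_n$ is itself a subcover of $\cH_{q^n}$ (which does not contradict Corollary \ref{notgal}, since that statement concerns the full curve $\cX_n$), but it produces none of the splitting data needed downstream; it can therefore replace the paper's proof of this one proposition, not the surrounding development.
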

\begin{proof}
Using Lemma \ref{lem:genusrhoz}, the theorem is equivalent with the statement that the function field $\K(\eta,z)$ has precisely
$$q^{2n}+1+(q+1)(m-1)q^n=q+3+(q^n+1)\left( q^n-2 + q^n-q \right)$$ many $\K$-rational places. Above the poles and zeroes of $\eta^{q+1}-\eta$ in $\K(\eta)$ lie exactly $2+q+1$ $\K$-rational places of $\K(\eta,z)$. Further, for any $\eta \in \mathbb{F}_{q^n}\setminus \{0,1\}$, the equation $z^{q^n+1}=\eta^{q+1}-\eta$ has $q^n+1$ distinct solutions in $\K$, since then $\eta^{q+1}-\eta \in \mathbb{F}_{q^n} \setminus \{0\}$. This gives rise to precisely $q^n-2$ places of $\K(\eta)$ that split completely in the extension $\K(\eta,z)/\K(\eta).$ What remains to be shown is that there exist $q^n-q$ values of $\eta \in \K \setminus \mathbb{F}_{q^n}$ such that the equation
$z^{q^n+1}=\eta^{q+1}-\eta$ has $q^n+1$ distinct solutions in $\K$. Equivalently, we need to shown that there are $q^n-q$ values of $\eta \in \K \setminus \mathbb{F}_{q^n}$ such that $\eta^{q+1}-\eta \in \mathbb{F}_{q^n} \setminus \{0\}.$

Now fix $\alpha \in \mathbb{F}_{q^2} \setminus \mathbb{F}_q$ and suppose that $\eta \in \K \setminus \mathbb{F}_{q^n}$. Since $n$ is odd, any element $\eta \in \mathbb{F}_{q^{2n}} \setminus \mathbb{F}_{q^{n}}$ can uniquely be written in the form $\eta=\eta_1+\alpha \eta_2,$ with $\eta_1,\eta_2\in \mathbb{F}_{q^n}$ and $\eta_2 \neq 0.$ Now note that
\begin{align*}
\eta^{q+1}-\eta & = (\eta_1+\alpha \eta_2)^{q+1}-(\eta_1+\alpha \eta_2)\\
 & =  \eta_1^{q+1}+\eta_1^q\alpha \eta_2+\eta_1\alpha^q\eta_2^q+\alpha^{q+1}\eta_2^{q+1}-\eta_1-\alpha \eta_2\\
 & = \eta_1^{q+1}+\eta_1(\alpha^q+\alpha)\eta_2^q+\alpha^{q+1}\eta_2^{q+1}-\eta_1+\alpha(\eta_1^q\eta_2-\eta_1\eta_2^q-\eta_2).
\end{align*}
This implies that $\eta^{q+1}-\eta \in \mathbb{F}_{q^n}\setminus \{0\}$ if and only if $\eta_1^q\eta_2-\eta_1\eta_2^q-\eta_2=0.$
Since we require that $\eta_2 \neq 0$, we obtain that
\begin{equation}\label{eq:N}
\eta_1^q-\eta_1\eta_2^{q-1}-1=0 \ \makebox{ or equivalently } \ \eta_2^{q-1}=\frac{(\eta_1-1)^q}{\eta_1}.
\end{equation}
Now we compute the number of possibilities for $(\eta_1,\eta_2)$ by considering above equation as a defining equation of a function field $\mathbb{F}_{q^n}(\eta_1,\eta_2).$ By Equation \eqref{eq:N}, the extension $\mathbb{F}_{q^n}(\eta_1,\eta_2)/\mathbb{F}_{q^n}(\eta_1)$ is a Kummer extension of degree $q-1$ in which the zeroes of $\eta_1$ and $\eta_1-1$ in $\mathbb{F}_{q^n}(\eta_1)$ are totally ramified with ramification index $q-1$, while the pole of $\eta_1$ in $\mathbb{F}_{q^n}(\eta_1)$ splits completely. This implies that the genus of $\mathbb{F}_{q^n}(\eta_1,\eta_2)$ is zero and hence that it is has $q^n+1$ many $\mathbb{F}_{q^n}$-rational places. Of these, exactly $q+1$ are either a pole of $\eta_2$ (equivalently a pole or zero of $\eta_1$) or a zero of $\eta_1-1.$ This leaves $q^n-q$ many $\mathbb{F}_{q^n}$-rational places of $\mathbb{F}_{q^n}(\eta_1,\eta_2)$. Each of these places give rise to a distinct value of $\eta \in \mathbb{F}_{q^{2n}}$ satisfying that $\eta_2 \neq 0$, while $\eta^{q+1}-\eta \in \mathbb{F}_{q^n}^*.$ This is what we needed to show.
\end{proof}

\begin{corollary}\label{cor:splitting}
For a given $\alpha \in \mathbb{F}_{q^2} \setminus \mathbb{F}_q$, a place $P$ of $\K(\delta)$ splits completely in the extension $\K(\eta,z)/\K(\eta)$ if and only the evaluation $\delta(P)$ of $\delta$ at $P$ satisfies:
\begin{enumerate}
\item $\delta(P) \in \mathbb{F}_{q^n} \setminus \{0,1\}$, or
\item $\delta(P)=\dfrac{1+\alpha t^q}{t^{q-1}(1+\alpha t)},$ for $t \in \mathbb{F}_{q^n} \setminus \mathbb{F}_q.$
\end{enumerate}
\end{corollary}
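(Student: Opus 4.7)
The plan is to translate the complete-splitting criterion from the proof of Proposition \ref{etazismax} from the variable $\eta$ to the variable $\delta$, and then to a rational parameter $t$. The criterion itself reads: a place $P$ of $\K(\eta) = \K(\delta)$ splits completely in $\K(\eta,z)/\K(\eta)$ if and only if $\eta(P)$ is finite with $\eta(P)^{q+1} - \eta(P) \in \mathbb{F}_{q^n}^*$, using that the subgroup of $(q^n+1)$-th powers in $\K^*$ equals $\mathbb{F}_{q^n}^*$, so that the Kummer equation $z^{q^n+1} = \eta^{q+1}-\eta$ has $q^n+1$ solutions in $\K$ exactly when its right-hand side lies in $\mathbb{F}_{q^n}^*$.

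I would then split into the two cases $\eta(P) \in \mathbb{F}_{q^n}$ and $\eta(P) \in \K \setminus \mathbb{F}_{q^n}$ that structure the proof of Proposition \ref{etazismax}. In the first case, the Frobenius condition is automatic and only $\eta(P) \ne 0, 1$ remains; applying the $\mathbb{F}_q$-rational Möbius map $\delta = \eta/(\eta-1)$, which sends $\{0,1,\infty\}$ to $\{0,\infty,1\}$, converts this to $\delta(P) \in \mathbb{F}_{q^n} \setminus \{0,1\}$, i.e.\ the first condition. In the second case I would invoke the explicit description $\eta(P) = \eta_1 + \alpha \eta_2$ with $\eta_1, \eta_2 \in \mathbb{F}_{q^n}$, $\eta_2 \ne 0$, subject to $\eta_2^{q-1} = (\eta_1-1)^q/\eta_1$, which was already derived in the proof of Proposition \ref{etazismax}.

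The one real calculation is to rationalize this genus-zero locus and read off $\delta$ as a function of a single parameter. My plan is to set $t := (\eta_1-1)/\eta_2$; the defining relation then forces $t^{q-1} = \eta_1/(\eta_1-1)$, from which $\eta_1 = t^{q-1}/(t^{q-1}-1)$ and $\eta_2 = 1/(t^q - t)$, giving $\eta = (t^q + \alpha)/(t^q - t)$ and therefore $\delta = \eta/(\eta-1) = (t^q+\alpha)/(t+\alpha)$. A final change of variables $t \mapsto 1/t$ (a bijection of $\mathbb{F}_{q^n}\setminus \mathbb{F}_q$ to itself) rewrites this as $\delta = (1+\alpha t^q)/(t^{q-1}(1+\alpha t))$, which is the second condition. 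The main obstacle, and the only step requiring care, is the boundary bookkeeping: I need to check that the admissible $t$ range is exactly $\mathbb{F}_{q^n}\setminus \mathbb{F}_q$, by matching the excluded values $t = 0, \infty$ and $t \in \mathbb{F}_q^*$ with the $q+1$ places of $\mathbb{F}_{q^n}(\eta_1,\eta_2)$ excluded in the proof of Proposition \ref{etazismax} (the zero and poles of $\eta_1$ together with the zero of $\eta_1-1$), and to conclude the bijectivity of $t \mapsto \delta$ on the complement from the rational structure of $\mathbb{F}_{q^n}(\eta_1,\eta_2)$.
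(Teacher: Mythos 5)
Your proposal is correct and follows essentially the same route as the paper: both read off the splitting criterion from the proof of Proposition \ref{etazismax}, treat the cases $\eta(P)\in\mathbb{F}_{q^n}$ and $\eta(P)=\eta_1+\alpha\eta_2$ with $\eta_2\neq 0$ separately, and rationalize the locus $\eta_2^{q-1}=(\eta_1-1)^q/\eta_1$ by a degree-one parameter before converting to $\delta=\eta/(\eta-1)$. The only difference is cosmetic: the paper sets $t:=\eta_2/(\eta_1-1)$ and lands on the stated formula directly, whereas you use the reciprocal parameter and compensate with the final substitution $t\mapsto 1/t$; the boundary bookkeeping (excluding $t\in\mathbb{F}_q\cup\{\infty\}$) comes out identically.
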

\begin{proof}
From the proof of Proposition \ref{etazismax} we see that a splitting place $P$ comes about in two ways.

If $\eta(P)$ is in $\mathbb{F}_{q^n} \setminus \{0,1\}$, then, since $\delta=\eta/(\eta-1)$, we have $\delta(P)\in \mathbb{F}_{q^n} \setminus \{0,1\}.$

If $\eta(P)=\eta_1+\alpha \eta_2$ where $\eta_1,\eta_2 \in \mathbb{F}_{q^n}$, $\eta_2 \neq 0$ and $\eta_1^q-\eta_1\eta_2^{q-1}-1=0,$ then we define $t:=\frac{\eta_2}{\eta_1-1}.$ Using Equation \eqref{eq:N}, we can express $\eta_1$ and $\eta_2$ in terms of $t$ yielding that $$\eta(P)=\eta_1+\alpha \eta_2=\frac{1}{1-t^{q-1}}+\alpha\frac{t^q}{1-t^{q-1}}=\frac{1+\alpha t^q}{1-t^{q-1}}.$$
Hence
$$\delta(P)=\frac{\eta(P)}{\eta(P)-1}=\frac{1+\alpha t^q}{1-t^{q-1}} \cdot \frac{1-t^{q-1}}{1+\alpha t^q+t^{q-1}-1}=\frac{1+\alpha t^q}{t^{q-1}(1+\alpha t)}.$$
Recall from the proof of Proposition \ref{etazismax} that the zero and pole of $\eta_1$ as well as the zero of $\eta_1-1$ should not be considered when studying completely splitting places. Since $\eta_1=1/(1-t^{q-1})$, this means that $t \in \mathbb{F}_{q^n}$, but $t \not\in \mathbb{F}_q$ just as claimed.
\end{proof}
To prove the maximality of $\mathcal X_n$, or equivalently of $\K(x,y,z)$, the strategy is now to study which of the places in Corollary \ref{cor:splitting} split completely in the extension $\K(x,y)/\K(\delta)$ as well. Indeed, in light of Figure \ref{fig:one}, such places will also split completely in the extension $\K(x,y,z)/\K(\delta),$ since it is the composite of $\K(x,y)$ and $\K(\delta,z).$ We will first consider the extension $\K(\rho)/\K(\delta).$

The description of the splitting places in the previous corollary seems to depend on the choice of $\alpha \in \mathbb{F}_{q^2} \setminus \mathbb{F}_q.$ However, note that the set of splitting places as a whole does not depend on $\alpha.$ The same is true for the set of splitting places in the next proposition.

\begin{proposition}\label{prop:splittingtorho}
For a given $\alpha \in \mathbb{F}_{q^2} \setminus \mathbb{F}_q$, let $P$ be a place of $\K(\delta)$ such that the evaluation $\delta(P)$ of $\delta$ at $P$ satisfies:
\begin{enumerate}
\item $\delta(P) \in (\mathbb{F}_{q^n})^{q-1}\setminus \{0,1\}$, or
\item $\delta(P)=\dfrac{1+\alpha t^q}{t^{q-1}(1+\alpha t)},$ for $t \in \mathbb{F}_{q^n} \setminus \mathbb{F}_q.$
\end{enumerate}
Then the place $P$ splits completely in the function field extension $\K(\rho)/\K(\delta).$
\end{proposition}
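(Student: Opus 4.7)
The extension $\K(\rho)/\K(\delta)$ is a Kummer extension of degree $q^2-1$, since $\rho^{q^2-1}=\delta$; its only ramified places in $\K(\delta)$ are the zero and pole of $\delta$, both totally ramified. Hence any place $P$ with $\delta(P)\in \K^*$ is unramified, and it splits completely if and only if $\delta(P)$ is a $(q^2-1)$-st power in $\K$. Since $\K^*$ is cyclic of order $q^{2n}-1$ and $(q^2-1)\mid (q^{2n}-1)$, this is equivalent to $\delta(P)^{(q^{2n}-1)/(q^2-1)}=1$, which via $N_{\K/\mathbb{F}_{q^2}}(\beta)=\beta^{1+q^2+\cdots+q^{2n-2}}$ is just
$$N_{\K/\mathbb{F}_{q^2}}(\delta(P))=1$$
(this is the usual Hilbert~90 description of $(q^2-1)$-st powers as norm-$1$ elements). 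My plan is therefore to verify in both cases that $\delta(P)\in \K^*$ and that this norm equals $1$.

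For the first requirement, case~$1$ is immediate as $\delta(P)\in \mathbb{F}_{q^n}^*$. In case~$2$, note $t\ne 0$ because $t\notin \mathbb{F}_q$; vanishing of $1+\alpha t^q$ or of $1+\alpha t$ would force $-1/\alpha\in \mathbb{F}_{q^n}$ (using that Frobenius is bijective on $\overline{\mathbb{F}_q}$), which contradicts $\alpha\in \mathbb{F}_{q^2}\setminus \mathbb{F}_q$ together with $\mathbb{F}_{q^2}\cap \mathbb{F}_{q^n}=\mathbb{F}_q$ (the latter valid precisely because $n$ is odd).

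Case~$1$ of the norm calculation is a one-liner: $N_{\K/\mathbb{F}_{q^2}}(s^{q-1})=s^{(q-1)(q^{2n}-1)/(q^2-1)}=s^{(q^n-1)m}=1$, since $s^{q^n-1}=1$. Case~$2$ is the heart of the proof. Applying the Frobenius $x\mapsto x^{q^2}$ repeatedly and using $\alpha^{q^2}=\alpha$ I get
$$N_{\K/\mathbb{F}_{q^2}}(\delta(P))=\prod_{i=0}^{n-1}\frac{1+\alpha t^{q^{2i+1}}}{1+\alpha t^{q^{2i}}}\cdot\prod_{i=0}^{n-1}t^{-(q-1)q^{2i}}.$$
The crucial observation, which once more leverages that $n$ is odd so $\gcd(2,n)=1$, is that both $\{2i\bmod n\}_{i=0}^{n-1}$ and $\{(2i+1)\bmod n\}_{i=0}^{n-1}$ enumerate $\{0,\ldots,n-1\}$; combined with $t^{q^n}=t$, this makes the numerator and denominator of the first product both equal to $\prod_{j=0}^{n-1}(1+\alpha t^{q^j})$, and they cancel. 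The remaining factor simplifies to $t^{-(q^n-1)m}=1$, again from $t\in \mathbb{F}_{q^n}^*$.

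The whole strategy rests on translating complete splitting into a norm equation; the main obstacle is bookkeeping the cancellation in case~$2$, which is precisely where the parity of $n$ is used in an essential way (as it is throughout the paper).
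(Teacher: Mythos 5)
Your proof is correct. Both you and the paper reduce the statement to the same criterion: since $\K(\rho)/\K(\delta)$ is a Kummer extension of degree $q^2-1$ unramified outside the zero and pole of $\delta$, the place $P$ splits completely if and only if $\delta(P)$ is a $(q^2-1)$-st power in $\K^*$. Where you diverge is in how this is verified for case 2. The paper exhibits an explicit $(q^2-1)$-st root (up to the harmless factor $t^{-(q-1)}$) via a telescoping chain of identities $(1+\alpha t^{q^{2n-(2k-1)}})=(1+\alpha t^{q^{2n-(2k+1)}})^{q^2}$, ending in a factor $(1+\alpha t)$ that cancels the denominator; this yields the closed-form factorization of $\delta$ in Equation \eqref{eq:delta}. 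You instead check the equivalent norm condition $N_{\K/\mathbb{F}_{q^2}}(\delta(P))=1$, with the cancellation effected by the observation that $i\mapsto 2i \bmod n$ and $i\mapsto 2i+1 \bmod n$ both permute $\{0,\dots,n-1\}$ when $n$ is odd. Your route is shorter and arguably more transparent (the role of the parity of $n$ is isolated cleanly, and the nonvanishing of $1+\alpha t$ and $1+\alpha t^q$ is checked explicitly, which the paper leaves implicit). What the paper's more laborious computation buys is the explicit product formula \eqref{eq:delta}, which is not merely a proof device: it is reused verbatim in Corollary \ref{cor:splitting2} to write down $\rho^{q+1}(P)$ for the third family of splitting places, and that expression drives the Artin--Schreier splitting computation in Theorem \ref{thm:maximal}. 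So if you adopted your proof in the paper, you would still need to derive something like \eqref{eq:delta} separately for the later arguments.
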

\begin{proof}
Since $\delta=\rho^{q^2-1}$, it is clear that the equation $\rho^{q^2-1}=\delta(P)$ has $q^2-1$ distinct solutions in $\K$ if $\delta(P)$ is a $(q-1)$-th power in $\mathbb{F}_{q^n}.$ Since $\delta(P) \not \in \{0,1\}$ by Corollary \ref{cor:splitting}, the first part of the proposition follows.

Now suppose that $\delta(P)=\dfrac{1+\alpha t^q}{t^{q-1}(1+\alpha t)},$ for $t \in \mathbb{F}_{q^n} \setminus \mathbb{F}_q.$ To see that $\delta(P)$ is a $(q^2-1)$-th power in $\mathbb{F}_{q^n}$, observe that
since $t \in \mathbb{F}_{q^n}$, then $t^{q-1}$ is a $(q^2-1)$-th power in $\mathbb{F}_{q^{2n}}$. Hence, $\delta$ is a $(q^2-1)$-th power in $\mathbb{F}_{q^{2n}}$ if and only if $(1+\alpha t^q)/(1+\alpha t)$ is. Since $t^{q^{2n+1}} =t^q$ and $\alpha^{q^2}=\alpha$ we have that
$$(1+\alpha t^q)=(1+\alpha t^{q^{2n-1}})^{q^2}=(1+\alpha t^{q^{2n-1}})^{q^2-1} (1+\alpha t^{q^{2n-1}}).$$
Similarly,
$$(1+\alpha t^{q^{2n-1}})=(1+\alpha t^{q^{2n-3}})^{q^2}=(1+\alpha t^{q^{2n-3}})^{q^2-1} (1+\alpha t^{q^{2n-3}})$$
and proceeding iteratively, we obtain
$$(1+\alpha t^{2n-(n-2)})=(1+\alpha t^{q^{n}})^{q^2}=(1+\alpha t^{q^{n}})^{q^2-1} (1+\alpha t^{q^{n}})=(1+\alpha t^{q^{n}})^{q^2-1} (1+\alpha t).$$
This shows that
\begin{equation}\label{eq:delta}
\delta=\frac{1}{t^{q-1}} \cdot \bigg( \prod_{\substack{1 \le i \le n \\ i \ odd}} (1+\alpha t^{q^{2n-i}})\bigg)^{q^2-1}  \cdot \frac{1+\alpha t}{1+\alpha t}=\frac{1}{t^{q-1}} \cdot \bigg( \prod_{\substack{1 \le i \le n \\ i \ odd}} (1+\alpha t^{q^{n-i}})\bigg)^{q^2-1},
\end{equation}
which implies the desired.
\end{proof}

\begin{corollary}\label{ourABQmax}
The function field $\K(\rho,z)$ given by the equation $z^m=\frac{\rho^{q^2-1}-1}{\rho^{q-1}}$ is an $\K$-maximal function field of genus $g(\K(\rho,z))=\frac12(m-1)(q^2-1).$
\end{corollary}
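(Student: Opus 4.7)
My plan is to derive the genus from Riemann--Hurwitz applied to the degree-$m$ Kummer extension $\K(\rho,z)/\K(\rho)$, and then to establish maximality by constructing enough $\K$-rational places of $\K(\rho,z)$ to meet the Hasse--Weil upper bound from below.

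For the genus I would view $\K(\rho,z)=\K(\rho)(z)$ with $z^m=w$, where $w:=(\rho^{q^2-1}-1)/\rho^{q-1}$. In the rational function field $\K(\rho)$ one reads off
\[
(w)_{\K(\rho)}=\sum_{\zeta^{q^2-1}=1} P_{\rho=\zeta}-(q-1)P_{\rho=0}-q(q-1)P_{\rho=\infty},
\]
whose support has cardinality $q^2+1$. Since $n$ is odd, writing $m=q^{n-1}-q^{n-2}+\cdots+1$ gives $m\equiv 1\pmod{q-1}$ and $m\equiv 1\pmod q$, so $\gcd(m,q(q-1))=1$. Standard Kummer theory then forces every place in the support of $(w)$ to be totally ramified of index $m$ in $\K(\rho,z)/\K(\rho)$, and Riemann--Hurwitz yields $2g-2=-2m+(q^2+1)(m-1)$, i.e.\ $g=(m-1)(q^2-1)/2$.

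For maximality the plan is to exploit that $\K(\rho,z)$ is the compositum, over $\K(\delta)$, of the two cyclic (hence abelian) Kummer extensions $\K(\rho)/\K(\delta)$ and $\K(\delta,z)/\K(\delta)=\K(\eta,z)/\K(\eta)$, of respective degrees $q^2-1$ and $q^n+1$, giving $[\K(\rho,z):\K(\delta)]=(q-1)(q^n+1)$. Because the compositum is abelian, the decomposition group of any place $P$ of $\K(\delta)$ in $\K(\rho,z)/\K(\delta)$ embeds into the product of the two factor decomposition groups, so $P$ splits completely in $\K(\rho,z)/\K(\delta)$ if and only if it splits completely in each factor. Intersecting the criteria of Corollary \ref{cor:splitting} and Proposition \ref{prop:splittingtorho}, the completely splitting $\K$-rational places of $\K(\delta)$ are exactly those with $\delta(P)\in(\mathbb{F}_{q^n})^{q-1}\setminus\{0,1\}$, giving $(q^n-q)/(q-1)$ such places, together with those of the form $\delta(P)=(1+\alpha t^q)/(t^{q-1}(1+\alpha t))$ for $t\in\mathbb{F}_{q^n}\setminus\mathbb{F}_q$, giving $q^n-q$ more. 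Each of these contributes $(q-1)(q^n+1)$ rational places of $\K(\rho,z)$.

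Finally I would account for the places above $\delta\in\{0,1,\infty\}$: tracing through the tower, the totally ramified places $P_{\rho=0}$ and $P_{\rho=\infty}$ of $\K(\rho)$ above $\delta=0$ and $\delta=\infty$ each sit under a single $\K$-rational place of $\K(\rho,z)$, while $\delta=1$ splits in $\K(\rho)/\K(\delta)$ into the $q^2-1$ unramified $\K$-rational places $P_{\rho=\zeta}$ with $\zeta^{q^2-1}=1$, each of which sits under one more $\K$-rational place of $\K(\rho,z)$. Adding the resulting $1+(q^2-1)+1=q^2+1$ places to the splitting count yields
\[
q(q^n-q)(q^n+1)+q^2+1=q^{2n+1}-q^{n+2}+q^{n+1}+1,
\]
and a short manipulation shows this equals $q^{2n}+1+(q^2-1)(m-1)q^n$, the Hasse--Weil upper bound. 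Meeting the upper bound forces $\K$-maximality. The main subtlety I anticipate is the abelian-compositum step on decomposition groups, which is exactly what allows the independently derived splitting characterizations of Corollary \ref{cor:splitting} and Proposition \ref{prop:splittingtorho} to be combined into the exact count needed; the rest is routine Kummer theory and arithmetic.
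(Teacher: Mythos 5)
Your proposal is correct and follows essentially the same route as the paper: the genus via Riemann--Hurwitz for the degree-$m$ Kummer extension $\K(\rho,z)/\K(\rho)$ with its $q^2+1$ totally ramified places, and maximality by viewing $\K(\rho,z)$ as the compositum of $\K(\rho)$ and $\K(\eta,z)$ over $\K(\delta)$ and combining the splitting criteria of Corollary \ref{cor:splitting} and Proposition \ref{prop:splittingtorho} to count $(q^n-q)/(q-1)+(q^n-q)$ completely splitting places of $\K(\delta)$. Your version merely makes explicit a few points the paper leaves implicit (the $\gcd(m,q(q-1))=1$ check for total ramification and the decomposition-group argument for the compositum), and your final tally agrees with the Hasse--Weil bound exactly as required.
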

\begin{proof}
The extension $\K(\rho,z)/\K(\rho)$ is a Kummer extension of degree $m$ in which precisely $q^2+1$ places ramify, namely the $q^2$ zeroes of $\rho^{q^2}-\rho$ and the pole of $\rho$. Moreover, all these places are totally ramified and $\K$-rational. Hence by the Riemann--Hurwitz genus formula, we obtain the genus of $\K(\rho,z)$ directly.

To prove the maximality, we need to show that apart from the $q^2$ zeroes of $\rho^{q^2}-\rho$ and the pole of $\rho$ in $\K(\rho,z)$, the function field $\K(\rho,z)$ contains a further
$$q^{2n}+1+q^n(m-1)(q^2-1)-q^2-1=m (q^2-1)\left( \frac{q^n-q}{q-1}+q^n-q\right)$$
$\K$-rational places. However, combining Corollary \ref{cor:splitting} and Proposition \ref{prop:splittingtorho}, we see that exactly $(q^n-q)/(q-1)+q^n-q$ places of $\K(\delta)$ split completely in the extension $\K(\rho,z)/\K(\delta),$ since $\K(\rho,z)$ is the composite of $\K(\rho)$ and $\K(\eta,z);$ see Figure \ref{fig:one}.
\end{proof}

The following corollary is a direct consequence of the proofs of the previous results and among others describes the splitting places in the extension $\K(\rho,z)/\K(\rho)$. It will be useful to prove the maximality of $\K(x,y,z)$.

\begin{corollary}\label{cor:splitting2}
Let $\alpha \in \mathbb{F}_{q^2}\setminus \mathbb{F}_q$ be given. The $\K$-rational places of $\K(\rho,z)$ lie above the following three types of places of $\K(\rho)$:
\begin{enumerate}
\item The $q^2+1$ totally ramified places of $\K(\rho)$ in $\K(\rho,z) / \K(\rho)$. More specifically, these are the $q^2$ places $P$ in $\K(\rho)$ such that $\rho^{q+1}(P) \in \mathbb{F}_q$ together with the pole of $\rho$ in $\K(\rho).$
\item The $(q+1)(q^n-q)$ places $P$ of $\K(\rho)$ satisfying that $\rho^{q+1}(P) \in \mathbb{F}_{q^{n}} \setminus \mathbb{F}_{q}.$ These places split completely in $\K(\rho,z) / \K(\rho)$.
\item The $(q^2-1)(q^n-q)$ places $P$ of $\K(\rho)$ satisfying that
$$\rho^{q+1}(P)=\frac{a}{t} \cdot \bigg( \prod_{j=0}^{n} (1+\alpha^{q^{j+1}} t^{q^{n-j}})\bigg),$$ for some $a \in \mathbb{F}_q\setminus \{0\}$ and $t \in \mathbb{F}_{q^n} \setminus \mathbb{F}_q$. These places split completely in $\K(\rho,z) / \K(\rho)$.
\end{enumerate}
\end{corollary}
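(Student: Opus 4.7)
My plan is to classify the $\K$-rational places of $\K(\rho,z)$ by their image in $\K(\rho)$ under the degree-$m$ Kummer extension $\K(\rho,z)/\K(\rho)$: each such place lies above either a totally ramified $\K$-rational place of $\K(\rho)$, or a $\K$-rational place of $\K(\rho)$ that splits completely. Part 1 is immediate from Corollary \ref{ourABQmax}, which identifies the $q^2+1$ ramified places as the $q^2$ zeroes of $\rho^{q^2}-\rho$ together with the pole of $\rho$. The condition $\rho(P)\in\mathbb{F}_{q^2}$ is equivalent to $\rho^{q+1}(P)\in\mathbb{F}_q$: one direction is immediate, while the other follows because for $\rho(P)\neq 0$ the assumption $\rho^{q+1}(P)\in\mathbb{F}_q$ implies $\rho(P)^{(q-1)(q+1)}=1$.

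For Parts 2 and 3 I will work with the tower $\K(\delta)\subset\K(\rho),\K(\delta,z)\subset\K(\rho,z)$ in which $\K(\rho,z)$ is the Galois compositum of $\K(\rho)$ and $\K(\delta,z)=\K(\eta,z)$ over $\K(\delta)$. A $\K$-rational place $P'$ of $\K(\delta)$ splits completely in the compositum iff it splits completely in both sub-extensions, in which case each of the $q^2-1$ places $P$ of $\K(\rho)$ above $P'$ is $\K$-rational and splits completely in $\K(\rho,z)/\K(\rho)$. The places $P'$ splitting completely in $\K(\delta,z)/\K(\delta)$ are described in Corollary \ref{cor:splitting}, while those splitting completely in the Kummer extension $\K(\rho)/\K(\delta)$ are exactly those with $\delta(P')\in(\K^*)^{q^2-1}$. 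The intersection of these two conditions is precisely the setup of Proposition \ref{prop:splittingtorho}: for case (2) of that proposition both conditions hold automatically by Equation \eqref{eq:delta}, and for case (1) the tighter hypothesis $\delta(P')\in(\mathbb{F}_{q^n}^*)^{q-1}$ (as opposed to the weaker $\delta(P')\in\mathbb{F}_{q^n}^*$ from Corollary \ref{cor:splitting}) captures exactly $\mathbb{F}_{q^n}^*\cap(\K^*)^{q^2-1}$, because $\gcd(q-1,m)=1$ for odd $n$.

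It remains to convert these $\delta$-conditions into the $\rho^{q+1}$-conditions stated in the corollary, using $(\rho^{q+1})^{q-1}=\delta$. In case (1) of Proposition \ref{prop:splittingtorho}, writing $\delta(P')=w^{q-1}$ with $w\in\mathbb{F}_{q^n}^*\setminus\mathbb{F}_q^*$ gives $\rho^{q+1}(P)=\zeta w$ for $\zeta\in\mathbb{F}_q^*$; running over all such $w,\zeta$ sweeps out $\mathbb{F}_{q^n}\setminus\mathbb{F}_q$, and since $\mathbb{F}_{q^n}^*\subseteq(\K^*)^{q+1}$, each such value has exactly $q+1$ preimages $\rho(P)\in\K$, yielding the $(q+1)(q^n-q)$ places of Part 2. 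In case (2), Equation \eqref{eq:delta} of Proposition \ref{prop:splittingtorho} already writes $\delta(P')=(\Pi^{q+1}/t)^{q-1}$ where $\Pi=\prod_{i\text{ odd},\,1\le i\le n}(1+\alpha t^{q^{n-i}})$, so $\rho^{q+1}(P)=(a/t)\,\Pi^{q+1}$ for $a\in\mathbb{F}_q^*$. Expanding $\Pi^{q+1}=\prod_i (1+\alpha t^{q^{n-i}})(1+\alpha^q t^{q^{n-i+1}})$ and invoking $\alpha^{q^2}=\alpha$ together with the observation that, for odd $n$, the exponent sets $\{n-i : 1\le i\le n,\ i\text{ odd}\}$ and $\{n-i+1 : 1\le i\le n,\ i\text{ odd}\}$ partition $\{0,1,\dots,n\}$ into its even and odd elements, rewrites the product as $\prod_{j=0}^{n}(1+\alpha^{q^{j+1}}t^{q^{n-j}})$, matching Part 3. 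Counting $(q-1)(q^n-q)$ pairs $(a,t)$, each with $q+1$ preimages $\rho(P)$, gives the claimed $(q^2-1)(q^n-q)$ places. The main technical step is this reindexing of $\Pi^{q+1}$, which crucially uses that $n$ is odd.
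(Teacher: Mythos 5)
Your proposal is correct and follows essentially the same route as the paper: items 1 and 2 are read off from Corollary \ref{ourABQmax} and Proposition \ref{prop:splittingtorho} via $\delta=(\rho^{q+1})^{q-1}$, and item 3 comes from raising the product in Equation \eqref{eq:delta} to the $(q+1)$-th power and reindexing using $\alpha^{q^2}=\alpha$ and $t^{q^n}=t$. You merely supply details the paper leaves implicit (the parity partition of $\{0,\dots,n\}$, the identity $\mathbb{F}_{q^n}^*\cap(\K^*)^{q^2-1}=(\mathbb{F}_{q^n}^*)^{q-1}$ via $\gcd(q-1,m)=1$, and the place counts), all of which check out.
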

\begin{proof}
Apart from item 3., this follows directly from Proposition \ref{prop:splittingtorho} and Corollary \ref{ourABQmax}, since $\delta=\rho^{q^2-1}$. To derive the expression in the third type of places, observe that Equation \eqref{eq:delta} implies that
$$\rho^{q+1}(P)=\frac{a}{t} \cdot \bigg( \prod_{\substack{1 \le i \le n \\ i \ odd}} (1+\alpha t^{q^{n-i}})\bigg)^{q+1}=\frac{a}{t} \cdot \bigg( \prod_{j=0}^{n} (1+\alpha^{q^{j+1}} t^{q^{n-j}})\bigg),$$
with $a \in \mathbb{F}_q \setminus\{0\}.$ In the second equality, we used that $\alpha \in \mathbb{F}_{q^2}$ and hence $\alpha^{q^2}=\alpha$ as well as that $t \in \mathbb{F}_{q^n}.$
\end{proof}


Now we prove the main result of this section.

\begin{theorem}\label{thm:maximal}
Let $q$ be a prime power and $n \ge 3$ an odd integer. The curve $\mathcal X_n$ with function field $\K(x,y,z)$ defined by $y^{q+1}=x^{q+1}-1$ and $z^m=y(x^{q^2}-x)/(x^{q+1}-1)$, is an $\K$-maximal curve.
\end{theorem}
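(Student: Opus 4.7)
The plan is to establish $\K$-maximality by a direct count of the $\K$-rational places of $F := \K(x,y,z)$, verifying that this number attains the Hasse--Weil bound $q^{2n}+1+2g(\mathcal{X}_n)q^n$ for the genus computed in Proposition \ref{genus}. The framework is to view $F$ as the compositum of $F_1 := \K(x,y)$ and $F_2 := \K(\rho,z)$ over $F_0 := \K(\rho)$. Here $F_1/F_0$ is an Artin--Schreier extension of degree $q$ given by $w^q+w=1+1/\rho^{q+1}$ with $w=x/\rho$, while $F_2/F_0$ is a Kummer extension of degree $m$ whose $\K$-maximality is established in Corollary \ref{ourABQmax}. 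Since $\gcd(q,m)=1$ and both subextensions are Galois, $F/F_0$ is Galois of degree $qm$ with the two Galois groups acting independently.

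A $\K$-rational place of $F$ restricts to $\K$-rational places of both $F_1$ and $F_2$, so it lies above a rational place $P$ of $F_0$ admitting rational places above in each. Corollary \ref{cor:splitting2} already classifies the rational places of $F_0$ with rational places above in $F_2$: the $q^2+1$ Type 1 places (totally ramified in $F_2/F_0$) together with the $(q+1)(q^n-q)$ Type 2 and the $(q^2-1)(q^n-q)$ Type 3 places (both completely split in $F_2/F_0$). It remains to analyze the behaviour in $F_1/F_0$. The place $\rho=0$ is the unique totally (wildly) ramified place, and every other rational place $P$ of $F_0$ splits completely into $q$ rational places iff $c(P):=1+1/\rho(P)^{q+1}$ lies in the image $\wp(\K)$ of the $\mathbb{F}_q$-linear map $\wp\colon w\mapsto w^q+w$.

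The key auxiliary fact I would prove is $\mathbb{F}_{q^n}\subseteq \wp(\K)$. One identifies $\wp(\K)$ with the kernel of the alternating sum $c\mapsto \sum_{i=0}^{2n-1}(-1)^i c^{q^i}$; for $c\in\mathbb{F}_{q^n}$ (so $c^{q^n}=c$) this sum splits into two blocks of opposite sign because $(-1)^n=-1$, and vanishes. It is then enough to check $\rho(P)^{q+1}\in\mathbb{F}_{q^n}$ for every $P$ of the three types other than $(\rho=0)$: for Type 1 one has $\rho(P)^{q+1}\in\mathbb{F}_q$ and for Type 2 the claim is stated in the classification, while for Type 3 it follows by applying $\mathrm{Frob}_{q^n}$ to the explicit product $\rho(P)^{q+1}=(a/t)\prod_{j=0}^{n}(1+\alpha^{q^{j+1}}t^{q^{n-j}})$ and invoking $\alpha^{q^2}=\alpha$, $t^{q^n}=t$, together with $n$ odd, to see invariance. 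I expect this Type 3 Frobenius-invariance check to be the main technical obstacle.

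Finally I would combine the local analyses via Abhyankar's lemma to count rational places of $F$ above each $P$: the place $\rho=0$ (wildly totally ramified in $F_1/F_0$, tamely totally ramified in $F_2/F_0$ with coprime indices) contributes a single rational place; each of the other $q^2$ Type 1 places (splitting in $F_1/F_0$, totally ramified in $F_2/F_0$) contributes $q$ rational places; and each of the $q(q+1)(q^n-q)$ Type 2 or 3 places (splitting in both extensions) contributes $qm$ rational places. The total $1+q^3+q^2(q^n+1)(q^n-q)$ simplifies to $q^{2n}+1+q^n(q-1)(q^{n+1}+q^n-q^2)$, matching the Hasse--Weil bound for a curve of the genus computed in Proposition \ref{genus}, and completing the proof.
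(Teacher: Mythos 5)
Your overall framework --- viewing $\K(x,y,z)$ as the compositum of $\K(x,y)$ and $\K(\rho,z)$ over $\K(\rho)$, importing the classification of Corollary \ref{cor:splitting2}, reducing the splitting in the Artin--Schreier extension to the vanishing of the alternating sum $c\mapsto\sum_{\ell=0}^{2n-1}(-1)^\ell c^{q^\ell}$, and assembling the count $1+q^3+q^2(q^n+1)(q^n-q)$ --- is exactly the paper's strategy, and your treatment of the Type 1 and Type 2 places is correct. However, there is a genuine gap at precisely the point you flag as the main technical obstacle: for Type 3 places it is \emph{false} that $\rho^{q+1}(P)\in\mathbb{F}_{q^n}$, so the reduction to the auxiliary fact $\mathbb{F}_{q^n}\subseteq\wp(\K)$ does not apply there. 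Rewriting the factors via $\alpha^{q^{j+1}}=\alpha^{q^{(n-j)\bmod 2}}$ (valid since $n$ is odd), the Type 3 value is
$$\rho^{q+1}(P)=\frac{a}{t}\prod_{k=0}^{n}\bigl(1+(\alpha t)^{q^k}\bigr),$$
and applying $\mathrm{Frob}_{q^n}$ shifts this partial product to $\prod_{k=n}^{2n}(1+(\alpha t)^{q^k})$, which is not the same multiset of factors. Concretely, for $q=2$, $n=3$, $\alpha^2=\alpha+1$ and $t^3=t+1$, one computes $\prod_{k=0}^{3}(1+(\alpha t)^{2^k})=\alpha(t^2+1)$, so $\rho^{3}(P)=\alpha(t^2+1)/t\notin\mathbb{F}_{8}$. (The splitting condition does still hold there --- one checks $1/\rho^3(P)=\alpha^2t^2$ has absolute trace zero --- but not for the reason you give.)

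Because of this, the Type 3 case cannot be dispatched by Frobenius invariance; one must show directly that $\sum_{\ell=0}^{2n-1}(-1)^\ell\bigl(1/\rho^{q+1}(P)\bigr)^{q^\ell}=0$ for an argument of the sum that genuinely lives in $\K\setminus\mathbb{F}_{q^n}$. This is where the paper's proof does its real work: after normalizing $1/\rho^{q+1}(P)$ to a constant times $(s+\alpha)^{(q^{n-1}-1)/(q-1)}$ with $s=t^{-1}$, it reduces the required vanishing modulo $T^{q^{2n}}-T$ and $T^{q^n}-T-c$ (with $c=\alpha^q-\alpha$) to a polynomial identity $P_1(T)=P_2(T)$, which is then proved by an explicit bijection between the subsets indexing the expansions of the two sides. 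Your proposal as written omits this entire step, and since it is the heart of the theorem, the proof is incomplete; the remaining bookkeeping (Abhyankar's lemma and the final tally) is fine.
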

\begin{proof}
By Proposition \ref{genus}, the maximality of $\mathcal X_n$ follows, if we can show that $\K(x,y,z)$ has precisely the following number of $\K$-rational places:
\begin{equation}\label{eq:helpxyz}
q^{2n}+1+q^n(q-1)(q^{n+1}+q^n-q^2)=1+q^3+1+mq\left( (q+1)(q^n-q)+(q^2-1)(q^n-q)\right).
\end{equation}
It is easy to see that the $q^2+1$ places of type 1. from Corollary \ref{cor:splitting2} gives rise to exactly $q^3+1$ $\K$-rational places of $\K(x,y)$ (indeed, these places correspond exactly to the $q^3+1$ $\mathbb{F}_{q^2}$-rational points of the Hermitian curve $\mathcal H_q$). All these $q^3+1$ places are totally ramified in the extension $\K(x,y,z)/\K(x,y)$. Hence maximality of $\K(x,y,z)$ follows if we can show that the remaining $(q+1)(q^n-q)+(q^2-1)(q^n-q)$ places of $\K(\rho)$ from Corollary \ref{cor:splitting2}, split completely in the extension $\K(x,y,z)/\K(\rho)$; also see Figure \ref{fig:one}.

Recall Equation \eqref{eqpol} describing the extension $\K(x,y)/\K(\rho)$ explicitly as an Artin--Schreier extension:
$$\bigg(\frac{x}{\rho}\bigg)^q + \bigg(\frac{x}{\rho} \bigg)=\frac{\rho^{q+1}+1}{\rho^{q+1}};$$
also see Figure \ref{fig:one}. If $P$ is one of the $(q+1)(q^n-q)$ places of $\K(\rho)$ satisfying that $\rho^{q+1}(P) \in \mathbb{F}_{q^{n}} \setminus \mathbb{F}_{q},$ then $(\rho^{q+1}(P)+1)/\rho^{q+1}(P) \in \mathbb{F}_{q^n}.$ By Equation \eqref{eqpol}, we see that $P$ splits completely in the extension $\K(x,y)/\K(\rho)$. Since by Corollary \ref{cor:splitting2}, the place $P$ also splits completely in $\K(\rho,z)/\K(\rho)$, we conclude that $P$ also splits completely in $\K(x,y,z)/\K(\rho)$, since $\K(x,y,z)$ is the compositum of $\K(\rho,z)$ and $\K(x,y)$.

Now suppose that $P$ is one of the $(q^2-1)(q^n-q)$ places of $\K(\rho)$ satisfying that $$\rho^{q+1}(P)=\frac{a}{t} \cdot \bigg( \prod_{j=0}^{n} (1+\alpha^{q^{j+1}} t^{q^{n-j}})\bigg),$$ for some $a \in \mathbb{F}_q\setminus \{0\}$ and $t \in \mathbb{F}_{q^n} \setminus \mathbb{F}_q$. The theorem is proved, if we can show the claim that these places all split completely in the extension $\K(x,y)/\K(\rho)$. Since
$$T^{q^{2n}}-T=\sum_{\ell=0}^{2n-1}(-1)^\ell (T^q+T)^{q^\ell},$$
the equation $T^q+T=\frac{\rho^{q+1}(P)+1}{\rho^{q+1}(P)}$ has $q$ solutions in $\K$ if and only if
$$
\sum_{\ell=0}^{2n-1}(-1)^\ell \left(\frac{\rho^{q+1}(P)+1}{\rho^{q+1}(P)}\right)^{q^\ell}=0,
$$
or equivalently, if and only if
\begin{equation}\label{eq:conditionsplit}
\sum_{\ell=0}^{2n-1}(-1)^\ell \left(\frac{1}{\rho^{q+1}(P)}\right)^{q^\ell}=0.
\end{equation}
Let $A:=\prod_{j=0}^{n-1} (1+\alpha^{q^j} t^{q^{n-j}})$, so that $\rho^{q+1}(P)=\frac{a}{t} \cdot {A^{q^n}}\cdot (1+\alpha t) $ and hence $\frac{1}{\rho^{q+1}(P)}=b \cdot \frac{A \cdot t}{1+\alpha t}$ for some $b \in \mathbb{F}_q \setminus \{0\}$. Here we used that $A^{q^n+1}=\prod_{j=0}^{2n-1}(1+\alpha^{q^j}t^{2n-j}) \in \mathbb{F}_q \setminus \{0\},$ since $$\left(A^{q^n+1}\right)^q=\prod_{j=0}^{2n-1}(1+\alpha^{q^{j-1}}t^{2n-j+1})=A^{q^n+1}.$$
Since $b \in \mathbb{F}_q \setminus \{0\}$, Equation \eqref{eq:conditionsplit} can then be rewritten as
\begin{equation} \label{eq:claim}
\sum_{\ell=0}^{2n-1} (-1)^{\ell} \bigg( \frac{t \cdot A}{1+\alpha t}\bigg)^{q^\ell}=0.
\end{equation}
Observe that
$$A=t^{1+q+\ldots+q^{n-1}} \cdot (t^{-1}+\alpha) \cdot (t^{-q^{n-1}}+\alpha^q) \cdot \ldots \cdot (t^{-q}+\alpha),$$
where $t^{1+q+\ldots+q^{n-1}} \in \mathbb{F}_q \setminus \{0\}$, since $t \in \mathbb{F}_{q^n} \setminus \mathbb{F}_q$. Writing $1+\alpha t=(\alpha+t^{-1})t$ and denoting $s:=t^{-1}$, we see that Equation \eqref{eq:claim} is equivalent to
\begin{equation} \label{changevar}
\sum_{\ell=0}^{2n-1} (-1)^{\ell} \bigg( \prod_{j=0}^{n-2} (s^{q^{j+1}}+\alpha^{q^j}) \bigg)^{q^\ell}=0.
\end{equation}
Since the choice of $\alpha$ is arbitrary as long as the condition $\alpha \in \mathbb{F}_{q^2} \setminus \mathbb{F}_q$ is satisfied, we may replace $\alpha$ with $\alpha^q$, and correspondingly replace Equation \eqref{changevar} by the equivalent equation
\begin{equation} \label{rew}
\sum_{\ell=0}^{2n-1} (-1)^{\ell} \bigg( \prod_{j=0}^{n-2} (s^{q^{j}}+\alpha^{q^j}) \bigg)^{q^\ell}=\sum_{\ell=0}^{2n-1} (-1)^{\ell} (s+\alpha)^{q^\ell \cdot \frac{q^{n-1}-1}{q-1}}=0.
\end{equation}
for $\alpha \in \mathbb{F}_{q^2} \setminus \mathbb{F}_q$ and all $s \in \mathbb{F}_{q^n} \setminus \mathbb{F}_q$.
In other words, the theorem will follow if we can show that the polynomial
$$P(T):=\sum_{\ell=0}^{2n-1} (-1)^\ell \ T^{q^\ell \cdot \frac{q^{n-1}-1}{q-1}}$$
vanishes for all values of the form $T=s+\alpha$, where $\alpha \in \mathbb{F}_{q^2} \setminus \mathbb{F}_q$ and $s \in \mathbb{F}_{q^n} \setminus \mathbb{F}_q$.
Observe that for any such value of $T=s+\alpha$, we have $T^{q^{2n}}-T=0$ and $T^{q^n}-T-c=0$, where $c:=\alpha^q-\alpha \in \mathbb{F}_{q^2}$ satisfies $c^q+c=0$. Therefore, we may simplify the condition $P(T)=0$ by replacing $P(T)$ with any polynomial that is equivalent to it modulo the polynomials $T^{q^{2n}}-T$ and $T^{q^n}-T-c.$ Concretely, we will use this to replace $P(T)$ by a polynomial of lower degree.
First of all, for $\ell \geq n+1$ we have
\begin{align*}
q^\ell \frac{q^{n-1}-1}{q-1}& =q^\ell \frac{q^{2n-\ell}-1}{q-1} + q^{2n} \frac{q^{\ell-n-1}-1}{q-1}.
\end{align*}
Hence working modulo $T^{q^{2n}}-T$ the condition $P(T)=0$ can be simplified to
\begin{equation*} \label{ok}
\bar P(T)=\sum_{\ell=0}^{n} (-1)^\ell T^{q^\ell \frac{q^{n-1}-1}{q-1}}+\sum_{\ell=n+1}^{2n-1} (-1)^\ell T^{q^\ell \frac{q^{2n-\ell}-1}{q-1} + \frac{q^{\ell-n-1}-1}{q-1}}=0.
\end{equation*}
Similarly working modulo $T^{q^n}-T-c$ and using that $(-1)^n=-1$, the condition $\bar P(T)=0$ can be simplified to
\begin{align}\label{eq:phat}
0&=T^{\frac{q^{n-1}-1}{q-1}} + \sum_{\ell=1}^{n} (-1)^\ell T^{\frac{q^n-q^\ell}{q-1}} (T+c)^{\frac{q^{\ell-1}-1}{q-1}} + \sum_{\ell=n+1}^{2n-1} (-1)^\ell T^{\frac{q^{\ell-n-1}-1}{q-1}} (T+c)^{\frac{q^{n}-q^{\ell-n}}{q-1}} \notag\\
& = \sum_{\ell=1}^{n} (-1)^\ell T^{\frac{q^n-q^\ell}{q-1}} (T+c)^{\frac{q^{\ell-1}-1}{q-1}}- \sum_{\ell=1}^{n} (-1)^\ell (T+c)^{\frac{q^n-q^\ell}{q-1}} T^{\frac{q^{\ell-1}-1}{q-1}}.
\end{align}
Now let us write
$$P_1(T)=\sum_{\ell=1}^{n} (-1)^\ell T^{\frac{q^n-q^\ell}{q-1}} (T+c)^{\frac{q^{\ell-1}-1}{q-1}} \ \makebox{and} \ P_2(T)=\sum_{\ell=1}^{n} (-1)^\ell (T+c)^{\frac{q^n-q^\ell}{q-1}} T^{\frac{q^{\ell-1}-1}{q-1}},$$
then what we need to show is that $P_1(T)=P_2(T)$ for any $T=s+\alpha$, where $\alpha^q-\alpha=c$ and $s \in \mathbb{F}_{q^n} \setminus \mathbb{F}_q.$ What we in fact will show is the stronger statement that $P_1(T)=P_2(T)$ as polynomials in $T$ for any $c \in \mathbb{F}_{q^2}$ such that $c^q+c=0$. The original claim about the splitting of places in the extension $\K(x,y)/\K(\rho)$ (and hence the theorem) will then follow as well.

At this point note that
\begin{align*}
(T+c)^{\frac{q^{\ell-1}-1}{q-1}}& =(T+c)^{1+q+\cdots+q^{\ell-2}}=\prod_{j \in \{0,\dots,\ell-2\}}(T^{q^j}+c^{q^j})\\
&=\sum_{I\subset\{0,\dots,\ell-2\}}\left(\prod_{j \in I}T^{q^{j}}\right)\left(\prod_{k \in \{0,\dots,\ell-2\}\setminus I}c^{q^{k}}\right),
\end{align*}
where the summation is over all subsets $I$ of $\{0,\dots,\ell-2\}$. Hence
$$P_1(T)=\sum_{\ell=1}^{n} (-1)^\ell \sum_{I\subset\{0,\dots,\ell-2\}}T^{q^{n-1}+\cdots+q^\ell}\left(\prod_{j \in I}T^{q^{j}}\right)\left(\prod_{k \in \{0,\dots,\ell-2\}\setminus I}c^{q^{k}}\right).$$
Similarly,
$$P_2(T)=\sum_{\ell'=1}^{n} (-1)^{\ell'} \sum_{J\subset\{\ell',\dots,n\}}\left(\prod_{j \in J}T^{q^{j}}\right)\left(\prod_{k \in \{\ell',\dots,n\}\setminus J}c^{q^{k}}\right)T^{q^{\ell'-2}+\cdots+1}.$$

Given $\ell$ and $I\subset\{0,\dots,\ell-2\}$, define $\ell' \ge 1$ to be the unique maximal integer such that $\{0,\dots,\ell'-2\} \subset I.$ The case $\ell'=1$ corresponds to the case that $0 \not\in I$. Then $\ell' \le \ell$, the  case $\ell'=\ell$ corresponding to the case that $I=\{0,\dots,\ell-2\}.$ Now, still for a given $\ell$ and $I\subset\{0,\dots,\ell-2\}$, define $J=\{\ell,\dots,n-1\}\cup (I \setminus \{0,\dots,\ell'-2\}).$ Note that $J \subset \{\ell',\dots,n\},$ since by definition of $\ell'$, the integer $\ell'-1$ is not in $I$. Then we have \begin{align*}
& (-1)^\ell T^{q^{n-1}+\cdots+q^\ell}\left(\prod_{j \in I}T^{q^{j}}\right)\left(\prod_{k \in \{0,\dots,\ell-2\}\setminus I}c^{q^{k}}\right) =\\
\\&(-1)^\ell\dfrac{c^{q^{\ell'-1}}}{c^{q^{\ell-1}}}\left(\prod_{j \in J}T^{q^{j}}\right)\left(\prod_{k \in \{\ell',\dots,n\}\setminus J}c^{q^{k}}\right)T^{q^{\ell'-2}+\cdots+1}=\\
\\&(-1)^{\ell'} \left(\prod_{j \in J}T^{q^{j}}\right)\left(\prod_{k \in \{\ell',\dots,n\}\setminus J}c^{q^{k}}\right)T^{q^{\ell'-2}+\cdots+1},
\end{align*}
where in the last equality we used that $c^{q^{\ell'-1}}=(-1)^{\ell-\ell'}c^{q^{\ell-1}}$, since $c^q=-c.$ Summing this equality over $1 \le \ell \le n$ and all subsets $I \subset \{0,\dots,\ell-1\},$ we obtain that $P_1(T)=P_2(T)$ as claimed.
\end{proof}

\section{Determination of the full automorphism group of $\mathcal{X}_n$, $n \geq 5$}

We will now determine the complete automorphism group of $\mathcal X_n$ for $n \ge 5.$ We already observed in Theorem \ref{lift} that $Aut(\cX_n)$ contains a subgroup $G$, given in Equation \eqref{eq:groupG}, isomorphic to $SL(2,q) \rtimes C_{q^n+1}$. The group $G$ is obtained lifting in $m$ ways the maximal subgroup $M_\ell \cong SL(2,q) \rtimes C_{q+1}$ of $Aut(\cH_q) \cong PGU(3,q)$ fixing the non-tangent line $\ell:X+Y=0$. 
The aim of this section is to prove that for $n \ge 5$, $G$ is the full automorphism group of $\cX_n$, implying also that $M_\ell$ is the largest subgroup $Aut(\cH_q)$ that can be lifted to $Aut(\cX_n)$, i.e, that $PGU(3,q)$ can be lifted if and only if $n=3$.
The following lemma describes the strategy that will be used to show that $G$ is the full automorphism group of $\cX_n$. The affine equations in \eqref{xn} give rise to an affine part of the curve $\cX_n$. Considering its projective closure, $q+1$ points of $\cX_n$ at infinity appear. The places of $\K(x,y,z)$ corresponding to these will be denoted by $R_\infty^1,\dots,R_\infty^{q+1}$ and will be called the infinite places of $\K(x,y,z).$ Moreover, we denote their restrictions to $\K(x,y)$ by $P_\infty^1,\dots,P_\infty^{q+1}$, consistent with the previous notation. Finally recall the we assumed that place $P_\infty^1$ is centered around the point $(1:-1:0)$, implying that $R_\infty^1$ is centered around the infinite point $(1:-1:0:0)$ of $\cX_n.$

\begin{lemma} \label{normalizer}
Let $C_m= Gal(\K(x,y,z) / \K(x,y))$. If $Aut(\cX_n)$ acts on the set $\Omega=\{R_\infty^1 , \ldots, R_\infty^{q+1}\}$ of the $q+1$ infinite places of $\K(x,y,z)$, then $Aut(\cX_n)=G.$
\end{lemma}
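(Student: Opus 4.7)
The aim is to prove $|\Aut(\cX_n)|\le|G|$, since Theorem~\ref{lift} already gives $G\le\Aut(\cX_n)$. The plan proceeds in two steps: show that $C_m$ is normal in $\Aut(\cX_n)$, then apply Galois descent to the Hermitian curve.

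For normality, the strategy is to analyze how any $\sigma\in\Aut(\cX_n)$ acts on the function $z$. By hypothesis $\sigma$ permutes $\Omega$, so it preserves the pole divisor $D_\infty=(q^2-q)\sum_{i=1}^{q+1}R_\infty^i$ of $z$; hence $\sigma(z)\in L(D_\infty)$. One computes $L(D_\infty)$ via its $C_m$-eigenspace decomposition: for the generator $\tau\in C_m$ with $\tau(z)=\zeta z$, each $\zeta^j$-eigenfunction has the form $\phi z^j$ with $\phi\in\K(x,y)$ and $0\le j\le m-1$. The valuation conditions at the $q+1$ totally ramified infinite places and the $q^3-q$ totally ramified affine places of $\K(x,y,z)/\K(x,y)$ force $\phi$ to be globally regular on $\cH_q$ (hence constant) for $j\in\{0,1\}$, and to satisfy $(\phi)\ge\sum_{i=1}^{q+1}P_\infty^i$ --- contradicting $\deg((\phi))=0$ --- for $j\ge 2$. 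Therefore $L(D_\infty)=\K\oplus\K\cdot z$ has dimension two, and $\sigma(z)=\lambda z+\mu$ for some $\lambda\in\K^*$ and $\mu\in\K$.

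The main obstacle is then to show that $\mu=0$. The relation $\sigma(z)^m=\sigma(u)$ with $u=y(x^{q^2}-x)/(x^{q+1}-1)\in\K(x,y)$ places $(\lambda z+\mu)^m$ inside $\sigma(\K(x,y))$; if $\mu\ne 0$, this is a degree-$m$ polynomial in $z$ with nonzero constant term, so $\sigma(\K(x,y))$ would be a Hermitian-type subfield of $\K(x,y,z)$ of index $m$ different from $\K(x,y)$, whose $q^3-q$ affine ramification places $\sigma(\cP_{\mathrm{aff}})$ lie away from the zero locus of $z$. The plan is to rule this out by comparing the two candidate ramification data in light of the tower in Figure~\ref{fig:one} and the specific role of the non-tangent line $\ell:X+Y=0$ in pinning down $\Omega$. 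Once $\mu=0$, the relation $\sigma(z)=\lambda z$ yields $\sigma(u)=\lambda^m u$, so $\sigma$ preserves the subfield $\K(u)\subset\K(x,y)$; an analysis of the embedding $\sigma|_{\K(x,y)}$ at the level of the Galois extension $\K(x,y,z)/\K(u)$ (with group $S\cong SL(2,q)\times C_m$, cf.\ Remark~\ref{rem:Galoisgroups}) then forces $\sigma(\K(x,y))=\K(x,y)$, i.e.\ $\sigma$ normalizes $C_m$.

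With $C_m$ normal, the Galois correspondence yields a homomorphism $\pi:\Aut(\cX_n)\to\Aut(\cH_q)=\PGU(3,q)$ with $\ker\pi=C_m$. The hypothesis implies $\pi(\Aut(\cX_n))$ setwise stabilizes the $q+1$ intersection points of $\cH_q$ with $\ell$, so $\pi(\Aut(\cX_n))\le M_\ell$; by \cite{MZ}, $|M_\ell|=q(q-1)(q+1)^2$, whence
$$|\Aut(\cX_n)|\le m\cdot|M_\ell|=\frac{q^n+1}{q+1}\cdot q(q-1)(q+1)^2=q(q^2-1)(q^n+1)=|G|,$$
which together with $G\le\Aut(\cX_n)$ yields $\Aut(\cX_n)=G$.
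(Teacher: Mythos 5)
Your second half coincides with the paper's: once $C_m$ is normal, the quotient $\Aut(\cX_n)/C_m$ embeds into $\Aut(\cH_q)\cong\PGU(3,q)$, stabilizes the $q+1$ points of $\cH_q$ on the non-tangent line $\ell$, hence lies in $M_\ell$, and the order count $|\Aut(\cX_n)|\le m\,|M_\ell|=|G|$ finishes the proof. Your computation of $L(D_\infty)$ via the $C_m$-eigenspace decomposition is also correct and legitimately yields $\sigma(z)=\lambda z+\mu$.

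The genuine gap is the step $\mu=0$, which is exactly where you stop arguing and start describing a plan. To carry it out you would have to distinguish, intrinsically, the zero divisor of $z$ (the $q^3-q$ places over the affine $\mathbb{F}_{q^2}$-rational points of $\cH_q$) from the fibre of $z$ over the nonzero value $c=-\mu/\lambda$. Both are reduced effective divisors of degree $q^3-q$ supported on places unramified in $\K(x,y,z)/\K(z)$, and since that extension is Galois many fibres over nonzero $c$ consist entirely of $\K$-rational places; so degree, rationality and ramification over $\K(z)$ all fail to separate the two, and ``comparing ramification data in light of Figure~\ref{fig:one}'' does not by itself produce a contradiction. (Your subsequent step is also only asserted, though it is repairable: if $\sigma(z)=\lambda z$ then $\sigma$ preserves $\K(z^m)$, hence normalizes $S=\mathrm{Gal}(\K(x,y,z)/\K(z^m))\cong SL(2,q)\times C_m$; since $m$ is odd, $Z(S)$ is cyclic of order $m$ or $2m$ and $C_m$ is its unique subgroup of order $m$, so $C_m$ is characteristic in $S$ and therefore normalized by $\sigma$.) The paper sidesteps the whole issue: it takes $T$ to be the pointwise stabilizer of $\Omega$, which is normal because $\Aut(\cX_n)$ permutes $\Omega$; maximality of $\cX_n$ forces zero $p$-rank, so an order-$p$ automorphism fixes at most one place \cite[Lemma 11.129]{HKT} and $p\nmid|T|$; a prime-to-$p$ automorphism group fixing at least two places is cyclic \cite[Theorem 11.49]{HKT}; and $C_m$, as the unique subgroup of order $m$ of the cyclic normal subgroup $T$, is then normal in $\Aut(\cX_n)$. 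You should either adopt that route or supply an actual proof that $\mu=0$.
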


\begin{proof}
From the proof of Lemma \ref{genus}, the place $P_\infty^i$ is totally ramified in the extension $\K(x,y,z)/\K(x,y)$. Moreover, by definition, the place $R_\infty^i$ lies above it. Hence $C_m$ fixes each of the places $R_\infty^1,\dots,R_\infty^{q+1}$. Let
$$T:=\{\alpha \in Aut(\cX_n) : \  \alpha(P)=P, \ \rm{for \ every} \ P \in \Omega\}.$$
Clearly, $C_m$ is a subgroup of $T$.

We claim that $T$ is a normal subgroup of $Aut(\cX_n)$. Indeed, let $\alpha \in T$, $\beta \in Aut(\cX_n)$ and $R_\infty^i \in \Omega$. Since $Aut(\cX_n)$ acts on $\Omega$, we have $\beta(R_\infty^i)=R_\infty^j$ for some $j=1,\ldots,q+1$. Thus, $\beta^{-1} \alpha \beta(R_\infty^i)=\beta^{-1} \alpha(R_\infty^j)=\beta^{-1}(R_\infty^j)=R_\infty^i$. Hence $\beta^{-1} \alpha \beta$ fixes $\Omega$ pointwise for every  $\alpha \in T$ and $\beta \in Aut(\cX_n)$. This proves the claim.

We now claim that the characteristic $p$ does not divide $|T|$. If it did, then $T$ would contain an element of order $p$, which by definition would fix the $q+1$ places in $\Omega$. On the other hand, Theorem \ref{thm:maximal} implies that $\cX_n$ has zero $p$-rank. Then \cite[Lemma 11.129]{HKT} implies that any automorphism of order $p$ cannot have more than one fixed place. This proves the second claim.

Thus, $T$ is a normal subgroup of $Aut(\cX_n)$ of order coprime with $p$, whose elements fix more than one place. This implies that $T$ is cyclic using \cite[Theorem 11.49]{HKT}. Let $\beta \in Aut(\cX_n)$ be chosen arbitrary and consider the conjugate $\beta^{-1} C_m \beta=C_m^{\beta}$. Then ${C}_m^{\beta}$ is a cyclic subgroup of $T$ of order $m$. Since $T$ is cyclic and $C_m$ is contained in $T$, we have that ${C}_m^\beta=C_m$. This shows that ${C}_m^\beta=C_m$ for every $\beta \in Aut(\cX_n)$ and hence that $Aut(\cX_n)=N_{Aut(\cX_n)}(C_m)$.

Consider the quotient group $\tilde G=Aut(\cX_n) / C_m$. Then $\tilde G$ is a subgroup of $Aut(\cH_q) \cong PGU(3,q)$ acting on the $q+1$ infinite places $P_\infty^1,\ldots,P_\infty^{q+1}$ of $\K(x,y)$ with $R_\infty^i|P_\infty^i$ for every $i=1,\ldots,q+1$. This implies that $\tilde G$ is contained in the stabilizer $M_\ell$ of the non-tangent line $\ell$ and hence $|\tilde G|$ divides $q(q-1)(q+1)^2$, see \cite[Theorem A.10]{HKT}. Note that $G/C_m \leq \tilde G$ and hence $|\tilde G| \geq |G|/m=q(q-1)(q+1)^2$. Hence $|\tilde G|=q(q-1)(q+1)^2$ and $\tilde G=G/C_m$. Moreover, it shows that $|Aut(\cX_n)|=|\tilde G|\cdot |C_m|=|G|$ and hence that $Aut(\cX_n)=G$.
\end{proof}

The aim of the rest of this section is to prove that $Aut(\cX_n)$ acts on the set $\Omega=\{R_{\infty}^1,\ldots,R_{\infty}^{q+1}\}$, since then the previous lemma implies that $Aut(\cX_n)=G$. We will consider Weierstrass semigroups $H(R)$ of various $\K$-rational places $R$ of $\K(x,y,z)$ in order to achieve this. Since $G$ contains a group acting transitively on $\Omega$, it is sufficient to show that $H(R_\infty^1) \ne H(R)$ for every $i=1,\ldots,q+1$ and $R \in \cX_n(\mathbb{F}_{q^{2n}}) \setminus \Omega,$ where, as before, $R_\infty^1$ denotes the place centered at the infinite point $(1:-1:0:0)$. We will use also the notation $G(R)$ for the set of gaps of a place $R$, i.e., $G(R):=\mathbb{N} \setminus H(R).$

\begin{lemma}
Let $P \in \cX_n(\mathbb{F}_{q^{2n}}) \setminus \Omega$. Then $mq^2+mq+q^2-q \in G(P)$ while $mq^2+mq+q^2-q \in H(P_{\infty}^i)$ for every $i=1,\ldots,q+1$. In particular $H(P) \ne H(P_\infty^i)$ for every $i=1,\ldots,q+1$ and $Aut(\cX_n)$ acts on $\Omega$.
\end{lemma}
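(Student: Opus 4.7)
The lemma breaks into two independent claims: $mq^2+mq+q^2-q \in H(R_\infty^i)$ for every $i$, and $mq^2+mq+q^2-q \in G(P)$ for every affine rational $P \in \cX_n(\mathbb F_{q^{2n}}) \setminus \Omega$; preservation of $\Omega$ by $Aut(\cX_n)$ then follows at once because automorphisms preserve Weierstrass semigroups.

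For the \emph{membership}, I consider $F := z/\rho^{q+1} \in \K(x,y,z)$. From Equation~\eqref{div:rho} for $(\rho)_{\K(x,y)}$ and the total ramification of each $P_\infty^i$ in $\K(x,y,z)/\K(x,y)$ (with $v_{R_\infty^i}(z) = -(q^2-q)$ as in the proof of Proposition~\ref{genus}), I obtain $v_{R_\infty^1}(F) = -(q+1)mq - (q^2-q) = -(mq^2+mq+q^2-q)$ and $v_{R_\infty^j}(F) = (q+1)m - (q^2-q) \ge 0$ for $j \ge 2$, using $(q+1)m = q^n+1 > q^2-q$. Since $\rho$ has no affine zero on $\mathcal X_n$ (otherwise $y=-x$ would give $x^{q+1}=x^{q+1}-1$) and $z$ has no affine pole, $F$ has its unique pole at $R_\infty^1$. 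Transitivity of $G$ on $\Omega$ (Theorem~\ref{lift}) extends this to every $i$.

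For the \emph{gap claim}, suppose for contradiction that $F \in \K(x,y,z)$ has pole only at $P$ of order $k := mq^2+mq+q^2-q$, and let $P'$ be the restriction of $P$ to $\K(x,y)$. Decomposing $F = \sum_{j=0}^{m-1} F_j z^j$ with $F_j \in \K(x,y)$, a Vandermonde--cancellation argument at each unramified place $Q' \ne P'$ (the polynomial $\sum_j c_j T^j$ of degree $\le m-1$ would have to vanish at all $m$ distinct $m$-th roots of $u(Q')$, forcing all $c_j = 0$) shows that every $F_j$ is regular at $Q'$. In the \emph{totally ramified case} $P'$ is an affine $\mathbb F_{q^2}$-rational point of $\mathcal H_q$: the valuations $m v_{P'}(F_j) + j$ have pairwise distinct residues modulo $m$ (since $\gcd(m, q^2-q) = 1$ for odd $n$), so $v_P(F) = -k$ is attained uniquely at $j^* = m - (q^2-q)$, giving $v_{P'}(F_{j^*}) = -(q^2+q+1)$. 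Regularity of $F$ at each $R_\infty^i$ then forces $v_{P_\infty^i}(F_{j^*}) \ge q^2-q$ (for $n \ge 5$, since $(q^2-q)^2/m < 1$), so $(F_{j^*}) \ge -(q^2+q+1) P' + (q^2-q) \sum_{i=1}^{q+1} P_\infty^i$. Applying the tangent-line equivalence $(q+1) P' \sim \sum_i P_\infty^i$ (every $\mathbb F_{q^2}$-rational point of $\mathcal H_q$ is a flex of multiplicity $q+1$), the right-hand divisor is linearly equivalent to $(q^3-q^2-2q-1) P'$; for $q \ge 3$ the effective excess would have negative degree, impossible, while for $q = 2$ the excess has degree $1$ and class $[P']$ on the elliptic $\mathcal H_2$, forcing it to equal $P'$ itself and yielding the contradiction $v_{P'}(F_{j^*}) = -(q^2+q)$.

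\emph{The main obstacle} is the remaining unramified case, in which $P'$ is not $\mathbb F_{q^2}$-rational on $\mathcal H_q$, so $P$ is one of $m$ rational places above $P'$ and cancellations among the terms $F_j z^j$ evaluated at the distinct $z(P_i)$ allow more flexibility. My plan here is to analyze the leading-coefficient polynomial $\phi(T) = \sum_j c_j T^j$, which must vanish at the $m-1$ values $z(P_i)$ for $i \ne i_0$ (and hence equal $c \prod_{i \ne i_0}(T - z(P_i))$ up to a scalar), together with the higher-order vanishing conditions encoding full regularity of $F$ at each $P_i \ne P$. The resulting global constraints on each $F_j \in \K(x,y)$ (poles confined to $P'$, required zero orders at every $P_\infty^i$), combined with the flex equivalence on $\mathcal H_q$ and Riemann--Roch on the Hermitian curve, should again force an impossible effective divisor, completing the proof.
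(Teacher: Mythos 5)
Your first half (showing $mq^2+mq+q^2-q \in H(R_\infty^i)$) is correct and essentially the paper's argument in compressed form: the paper exhibits $mq$ and $mq+q^2-q$ as pole numbers at $R_\infty^1$ via $1/\rho$ and $z/\rho$ and adds them in the semigroup, which amounts to your single function $z/\rho^{q+1}=(1/\rho)^q\cdot(z/\rho)$. Your valuation computations there check out, and transitivity of $G$ on $\Omega$ does transfer the conclusion to every $i$.

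The second half has a genuine gap. The gap claim must be proved for \emph{every} rational place $P\notin\Omega$, and the overwhelming majority of these lie over places of $\K(x,y)$ that are \emph{unramified} in $\K(x,y,z)/\K(x,y)$ (only $q^3-q$ of the roughly $q^{2n}$ affine rational places sit over ramified ones). You handle only the ramified case -- and that part of your argument is essentially sound: the valuations $mv_{P'}(F_j)+j$ are distinct mod $m$, the pole order pins down $j^*=m-(q^2-q)$ and $v_{P'}(F_{j^*})=-(q^2+q+1)$, and the degree count (resp.\ the elliptic-curve argument for $q=2$) gives a contradiction. But in the unramified case the $m$ conjugate places above $P'$ destroy the ``distinct valuations'' mechanism, poles of the $F_j$ can cancel among the terms $F_jz^j$, and your paragraph for this case is a plan (``should again force an impossible effective divisor''), not a proof. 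Worse, your strategy makes no use of the maximality of $\cX_n$, whereas the existence or non-existence of such a function at a generic rational place is exactly the kind of global information that maximality supplies; it is not clear the divisor bookkeeping you propose can close on its own. The paper avoids the case split entirely: it uses the criterion that $k\in G(R)$ iff some regular differential $\omega$ has $v_R(\omega)=k-1$, computes $(dz)=\frac{2g-2}{q+1}\sum_i R_\infty^i$, invokes the Fundamental Equation for maximal curves to produce $t_R$ with $(t_R)=(q^n+1)(R-R_\infty^1)$, and checks that $f\,dz$ with $f=(t_R\rho)^q(z-z(R))^{q^2-q-1}$ is regular with $v_R(f\,dz)=mq^2+mq+q^2-q-1$. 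That construction works uniformly for all $P\notin\Omega$ and is where the hypothesis $n\ge 5$ actually enters (to bound $mq+(q^2-q)(q^2-q-1)$ by $\deg$-constraints on $(dz)$). You should either adopt that route or supply a complete argument for the unramified case; as written the lemma is not proved.
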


\begin{proof}
To prove that $mq^2+mq+q^2-q \in H(R_{\infty}^i)$ for every $i=1,\ldots,q+1$, it is sufficient to show that $mq, mq+q^2-q \in H(R_\infty^1)$. Let $P_\infty^1$ denote the restriction of $R_\infty^1$ to $\K(x,y)$.
Equation \eqref{div:rho} implies that
$$(\rho)_{\K(x,y,z)}=qm R_{\infty}^{1} - m \sum_{i=2}^{q+1} R_{\infty}^i,$$
so that $1/\rho$ has a pole of order $mq$ at $R_\infty^1$ and no poles elsewhere. Hence $mq \in H(R_\infty^1).$
Moreover,
$$(z)^{(\infty)}_{\K(x,y,z)} =  (q^2-q)  \sum_{i=1}^{q+1} R_{\infty}^i,$$
where $(z)^{(\infty)}_{\K(x,y,z)}$ denotes the pole divisor of $z$.
Hence,
$$\bigg(\frac{z}{\rho}\bigg)^{(\infty)}_{\K(x,y,z)}=(qm+q^2-q)R_{\infty}^1.$$
Here we used that $m-q^2+q>0$ as $n \geq 3$. Hence $mq+q^2-q \in H(R_\infty^1)$.

We now prove that $mq^2+mq+q^2-q \in G(R)$ for every $R \in \cX_n(\mathbb{F}_{q^{2n}}) \setminus \Omega$. To do that it is sufficient to show that there exists a regular (sometimes also called holomorphic) differential $\omega$ such that $v_R(\omega)=mq^2+mq+q^2-q-1$, see \cite[Corollary 14.2.5]{VS}. In the extension $\K(x,y,z) / \K(z)$ the unique ramified place is the pole of $z$ above which lie the places in $\Omega$. Moreover the group $G$ contains a subgroup of order $q+1$ acting sharply transitively on $\Omega$, while fixing $z$ and $dz$, so $\mathrm{supp}(dz)=\Omega$. Since $\deg(dz)=2g(\cX_n)-2$ and $|\Omega|=q+1$, we get that $$(dz)_{\K(x,y,z)}=\frac{2g(\cX_n)-2}{q+1}(R_\infty^1+\ldots+R_\infty^{q+1})
=(q^{n+1}-q^n-q^2+2q-2)(R_\infty^1+\ldots+R_\infty^{q+1}).$$
Hence the differential $fdz$ is regular if and only if $f \in L((q^{n+1}-q^n-q^2+2q-2)(R_\infty^1+\ldots+R_\infty^{q+1}))$.
In order to prove that $mq^2+mq+q^2-q \in G(R)$ it is therefore sufficient to construct a function $f \in L((q^{n+1}-q^n-q^2+2q-2)(R_\infty^1+\ldots+R_\infty^{q+1}))$ such that $v_R(f)=mq^2+mq+q^2-q-1$.

From Theorem \ref{thm:maximal} $\cX_n$ is $\mathbb{F}_{q^{2n}}$-maximal and hence from the Fundamental Equation \cite[Theorem 9.79]{HKT}, there exists a function $t_R$ such that $(t_R)_{\K(x,y,z)}=(q^n+1)(R-R_\infty^1)$. In particular,
$$(t_R \cdot \rho)_{\K(x,y,z)}=(q^n+1)(R-R_\infty^1)+qm R_{\infty}^{1} - m \sum_{i=2}^{q+1} R_{\infty}^i=(q^n+1)R-m \sum_{i=1}^{q+1} R_{\infty}^i.$$
Moreover, since $R \in \cX_n(\mathbb{F}_{q^{2n}}) \setminus \Omega$, $R$ is centered at a certain affine point, say $(a,b,c)$ with $a,b,c \in \mathbb{F}_{q^{2n}}$. Observe that $(z-c)\ge -(q^2-q)(R_\infty^1+\ldots+R_\infty^{q+1})$ and since in the extension $\K(x,y,z) / \K(z)$ only the pole of $z$ is ramified, we have $v_R(z-c)=1$.
Let $f=(t_R \cdot \rho)^q (z-c)^{q^2-q-1}$, then $$(f)_{\K(x,y,z)} \ge -(mq+(q^2-q)(q^2-q-1))(R_\infty^1+\ldots+R_\infty^{q+1}).$$
Observing that $mq+(q^2-q)(q^2-q-1)<q^{n+1}-q^n-q^2+2q-2$, since $n \ge 5$, we get that $f \in L((q^{n+1}-q^n-q^2+2q-2)(R_\infty^1+\ldots+R_\infty^{q+1}))$ and hence that $fdz$ is a regular differential.
Moreover,
$$v_R(fdz)=v_R(fd(z-c))=q(q^n+1)+q^2-q-1=(q^2+q)m+q^2-q-1.$$ Considering the differential $w=fdz$
we obtain that $mq^2+mq+q^2-q \in G(R)$.
\end{proof}

Combining the previous results the full automorphism group of $\cX_n$ has now been determined.

\begin{theorem}
Let $q$ be a prime power and $n \geq 5$ odd. The full automorphism group of $\cX_n$ is $Aut(\cX_n)=G \cong SL(2,q) \rtimes C_{q^n+1}$. Moreover, $Aut(\cX_n)$ has a non-tame short orbit $\Omega$ consisting of the $q+1$ infinite places of $\K(x,y,z)$.
\end{theorem}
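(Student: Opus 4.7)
The plan is to combine the two preceding lemmas to obtain both assertions in a few lines. The lemma directly before this theorem produces the integer $N := mq^2 + mq + q^2 - q$ with the property that $N \in H(R_\infty^i)$ for every $i$, while $N \in G(R)$ for every $R \in \cX_n(\mathbb{F}_{q^{2n}}) \setminus \Omega$. Since any automorphism of $\cX_n$ induces an isomorphism of the local ring at a place and hence preserves Weierstrass semigroups, no automorphism can send an infinite place to a place outside $\Omega$. Therefore $Aut(\cX_n)$ acts on $\Omega$, and Lemma \ref{normalizer} applies directly to give $Aut(\cX_n) = G \cong SL(2,q) \rtimes C_{q^n+1}$.

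For the statement about $\Omega$, I would argue as follows. The subgroup $G$ of $Aut(\cX_n)$ already acts transitively on $\Omega$: the matrices in \eqref{eq:Ml} act on the points $(1:\alpha_i:0)$ of $\mathcal H_q$ precisely as the projectivities $\alpha_{a,\zeta c^q,c,\zeta a^q}$, and the cyclic $C_{q+1}$ factor sitting inside $M_\ell$ cycles through the $q+1$ infinite points transitively. Lifting this through the Kummer extension $\K(x,y,z)/\K(x,y)$ (in which each $P_\infty^i$ is totally ramified) gives a transitive action of $G$ on $\Omega$. Hence $\Omega$ is a single $Aut(\cX_n)$-orbit of length $q+1$, and the stabilizer of $R_\infty^1$ in $Aut(\cX_n) = G$ has order
\[
\frac{|G|}{|\Omega|} = \frac{q(q-1)(q+1)(q^n+1)}{q+1} = q(q-1)(q^n+1),
\]
which is divisible by the characteristic $p$. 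Consequently $\Omega$ is a short orbit and, since the stabilizers have order divisible by $p$, a non-tame one.

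The genuine obstacle has already been overcome in the preceding lemma, where the regular differential $\omega = f\,dz$ with $f = (t_R \rho)^q (z-c)^{q^2-q-1}$ was exhibited in order to certify that $N$ is a gap at $R \notin \Omega$; what remains here is pure bookkeeping. The only point I would verify carefully is that transitivity of the lifted $SL(2,q)$ on the infinite places of $\mathcal H_q$ passes through to transitivity on $\Omega$, which is immediate from the fact that each $R_\infty^i$ is the unique place of $\K(x,y,z)$ lying above the corresponding $P_\infty^i$.
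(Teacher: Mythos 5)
Your proposal is correct and follows the same route as the paper, which likewise obtains the theorem by combining the Weierstrass-semigroup lemma (to show $\Aut(\cX_n)$ preserves $\Omega$) with Lemma \ref{normalizer}; your added verification that $\Omega$ is a single orbit with stabilizer of order $q(q-1)(q^n+1)$ divisible by $p$ correctly fills in the non-tame short orbit claim that the paper leaves implicit.
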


\begin{remark}
$Aut(\cX_n)$ is asymptotically large, meaning that for every $q \geq 43$ and $n \geq 5$, the Classical Hurwitz upper bound $84(g(\cX_n)-1)$ does not hold.
\end{remark}

\begin{remark}
We have seen that $\mathcal C_3 \cong \mathcal X_3$ and that for any $q$ and any odd $n \ge 5$, the curves $\mathcal C_n$ and $\mathcal X_n$ have the same genus. One could suspect that if $n \ge 5$, the curves $\mathcal C_n$ and $\mathcal X_n$, though not isomorphic, are twists of each other, i.e., are isomorphic over $\overline{\mathbb{F}}_q$. If this would be the case, their automorphism groups over $\overline{\mathbb{F}}_q$ would be identical. However, it was shown in \cite[Thm. 3.10]{GSY} that if $\mathcal K$ is a maximal curve over $\mathbb{F}_{q^2}$ of genus at least $2$, then any automorphism of $\overline{\mathcal K}$, the curve obtained from $\mathcal K$ by extending the field of definition to $\overline{\mathbb{F}}_q$, is already defined over $\mathbb{F}_{q^2}$. Since over $\K$ and $n \ge 5$, the curves $\mathcal X_n$ and $\mathcal C_n$ have distinct automorphism groups, this result implies that they cannot be twists of each other.
\end{remark}

Though the previous results show that the curves $\cX_n$ for $n \geq 5$ exhibit different and new properties than previously known maximal curves, they do not provides examples of maximal curves with a previously unknown genus. In other words, these curves themselves do not add new values to the genus spectrum of maximal curves. Nevertheless, it is possible to provide new genera for maximal curves considering subfields of $\mathbb{F}_{q^{2n}}(x,y,z)$. As an example, in the following remark we consider some specific subfields of $\mathbb{F}_{q^{2n}}(x,y,z)$ with $n \geq 5$ which in some cases will produce (up to our knowledge) previously unknown values to this genus spectrum.

\begin{remark} \label{genera}
Let $q$ be a prime power and $n \geq 5$ odd. Let $k_1$ and $k_2$ be divisors of $q+1$ and $m=(q^n+1)/(q+1)$ respectively. The curve $\cY_{k_1,k_2}$ with function field $\mathbb{F}_{q^{2n}}(w,s)$ defined by
\begin{equation} \label{subcover}
w^{\frac{q^n+1}{k_2}}=s^{\frac{q+1}{k_1}} \cdot \dfrac{(s^{\frac{q^2-1}{k_1}}-1)^{q+1}}{(s^{\frac{q+1}{k_1}}-1)^q},
\end{equation}
is $\mathbb{F}_{q^{2n}}$-maximal of genus
\begin{equation} \label{eqyk1k2}
g(\cY_{k_1,k_2})=1+\frac{1}{2} \bigg[-\delta_1-\frac{q+1}{k_1}+\frac{(q^2-1)(q^n+1)}{k_1k_2}-\frac{q^2-1}{k_1} \delta_2 + \frac{q+1}{k_1} \delta_2-\delta_3\bigg],
\end{equation}
where $k=(q^n+1)/k_2$, $\delta_1=\gcd(k,\frac{q+1}{k_1})$, $\delta_2=\gcd(k,q+1)$ and $\delta_3=\gcd(k,\frac{(q^2-1)}{k_1})$.
Moreover, $\cY_{k_1,k_2}$ is a Galois subcover of $\cX_n$ of degree $k_1 \cdot k_2$.
\end{remark}

\begin{proof}
Let $H$ be the subgroup of the automorphism group of $\mathbb{F}_{q^{2n}}(x,y,z)$ generated by $\gamma_{k_1}$ and $\gamma_{k_2},$ where
$$\gamma_{k_1}: (x,y,z) \mapsto (\lambda x , \lambda^{-1} y, z), \quad \gamma_{k_2}: (x,y,z) \mapsto (x , y, \xi z), $$
and $\lambda,\xi \in \mathbb{F}_{q^{2n}}$ with $\ord(\lambda)=k_1$ and $\ord(\xi)=k_2$. Then $H$ is abelian of order $k_1 \cdot k_2$.
More precisely, $H = C_{k_1} \times C_{k_2}$, where $ C_{k_1}=\langle \gamma_{k_1} \rangle$ and $C_{k_2}=\langle \gamma_{k_2} \rangle$.
Let $s=x^{k_1}$ and $w=z^{k_2}$. Then $w$ and $s$ are fixed by $\mathcal{G}$ and
$$w^{\frac{q^n+1}{k_2}}=z^{q^n+1}=\frac{(x^{q^2}-x)^{q+1}}{(x^{q+1}-1)^q}=s^{{\frac{q+1}{k_1}}} \cdot \frac{(s^{\frac{q^2-1}{k_1}}-1)^{q+1}}{(s^{\frac{q+1}{k_1}}-1)^q}.$$
This proves that $\mathbb{F}_{q^{2n}}(s,w) \subseteq Fix(H)$ and $w^{\frac{q^n+1}{k_2}}=s^{{\frac{q+1}{k_1}}} \cdot (s^{\frac{q^2-1}{k_1}}-1)^{q+1}/(s^{\frac{q+1}{k_1}}-1)^q$.

To show that the curve defined by Equation \eqref{subcover} is irreducible and to compute the genus of $\mathbb{F}_{q^{2n}}(s,w)$ we note that
\begin{align*}
\Bigg( s^{{\frac{q+1}{k_1}}} \cdot &\frac{(s^{\frac{q^2-1}{k_1}}-1)^{q+1}}{(s^{\frac{q+1}{k_1}}-1)^q}\Bigg)_{\mathbb{F}_{q^{2n}}(s)}=\\
\\& \frac{(q+1)}{k_1}P_{s=0}+\sum_{\alpha^{\frac{q+1}{k_1}}=1} P_{s=\alpha}+(q+1)\cdot \sum_{\beta^{\frac{q^2-1}{k_1}}=1, \  \beta^\frac{q+1}{k_1} \ne 1} P_{s=\beta}-\frac{q(q^2-1)}{k_1}P_{s=\infty},
\end{align*}
and hence $\mathbb{F}_{q^{2n}}(s,w)/\mathbb{F}_{q^{2n}}(s)$ is a Kummer extension of degree $k=(q^n+1)/k_2$.
From \cite[Corollary 3.7.4]{Sti},
\begin{align*}
g(\mathbb{F}_{q^{2n}}(s,w)) & =1-k+\frac{1}{2} \bigg[k-\delta_1+\frac{q+1}{k_1} (k-1) +\bigg(\frac{q^2-1}{k_1}-\frac{q+1}{k_1} \bigg)(k-\delta_2)+k-\delta_3\bigg]\\
&=1+\frac{1}{2} \bigg[-\delta_1-\frac{q+1}{k_1}+\frac{(q^2-1)(q^n+1)}{k_1k_2}-\frac{q^2-1}{k_1} \delta_2 + \frac{q+1}{k_1} \delta_2-\delta_3\bigg],
\end{align*}
where $\delta_1=\gcd(k,\frac{q+1}{k_1})$, $\delta_2=\gcd(k,q+1)$ and $\delta_3=\gcd(k,\frac{(q^2-1)}{k_1})$.

In order to prove that $\mathbb{F}_{q^{2n}}(s,w) = Fix(H)$ consider the extension $\mathbb{F}_{q^{2n}}(x,y,z) / \mathbb{F}_{q^{2n}}(z^{q^n+1}) $. Recall that from Theorem \ref{lift}, $[\mathbb{F}_{q^{2n}}(x,y,z) : \mathbb{F}_{q^{2n}}(z^{q^n+1})]=|G|=(q^n+1)q(q^2-1)$. Hence,
\begin{align*}
&(q^n+1)q(q^2-1)=\\
&[\mathbb{F}_{q^{2n}}(x,y,z) : \mathbb{F}_{q^{2n}}(s,w) ] \cdot [\mathbb{F}_{q^{2n}}(s,w)  : \mathbb{F}_{q^{2n}}(s,w^{\frac{q^n+1}{k_2}}) ] \cdot [ \mathbb{F}_{q^{2n}}(s,w^{\frac{q^n+1}{k_2}})  : \mathbb{F}_{q^{2n}}(w^{\frac{q^n+1}{k_2}}) ]=\\
\\&[\mathbb{F}_{q^{2n}}(x,y,z) : \mathbb{F}_{q^{2n}}(s,w) ] \cdot \frac{q^n+1}{k_2} \cdot \frac{q(q^2-1)}{k_1},
\end{align*}
where the equality $ [ \mathbb{F}_{q^{2n}}(s,w^{\frac{q^n+1}{k_2}})  : \mathbb{F}_{q^{2n}}(w^{\frac{q^n+1}{k_2}}) ]=\frac{q(q^2-1)}{k_1}$ follows from Equation \eqref{subcover} observing that the degree in $s$ equals $\frac{q+1}{k_1}+\frac{(q+1)(q^2-1)}{k_1}-\frac{q(q+1)}{k_1}=\frac{q(q^2-1)}{k_1}$.
Thus, $[\mathbb{F}_{q^{2n}}(x,y,z) : \mathbb{F}_{q^{2n}}(s,w) ]=k_1 \cdot k_2 = |H|$ and $\mathbb{F}_{q^{2n}}(s,w)=Fix(H)$.
\end{proof}

Remark \ref{genera} provides new equations of $\mathbb{F}_{q^{2n}}$-maximal curves for every $q$ and $n \ge 5$ odd. Examples for small values of $q$ and $n$ are listed below. We have indicated a genus with boldface if it gives a new addition to the genus spectrum of $\mathbb{F}_{q^{2n}}$-maximal curves. To check whether or not a genus gave a new addition, we compared our results with the genera of maximal function fields given in \cite{AQ,ABB,BMXY,DO,FG,GOS,MX,MZ}.
\begin{itemize}
\item $q=4$ and $n=5$: $\{32, 156, {\bf 302}, 1506, {\bf 1532} \}$,
\item $q=4$ and $n=7$: $\{{\bf 212}, {\bf 842}, 1056, 4206, {\bf 24572}\}$,
\item $q=5$ and $n=5$: $\{{\bf 6242}, {\bf 12484}, 18724\}$,
\item $q=7$ and $n=5$: $\{{\bf 243}, {\bf 485}, 969, {\bf 1941}, {\bf 4563 }, {\bf 9125}, 18249, 36501, {\bf 50403 }, {\bf 100805}, 201609 \}$.
\end{itemize}

\section*{Acknowledgments}

The second author would like to thank the Italian Ministry MIUR, Strutture Geometriche, Combinatoria e loro Applicazioni, Prin 2012 prot.~2012XZE22K and GNSAGA of the Italian INDAM.

\vspace{1ex}
\noindent
Peter Beelen

\vspace{.5ex}
\noindent
Technical University of Denmark,\\
Department of Applied Mathematics and Computer Science,\\
Matematiktorvet 303B,\\
2800 Kgs. Lyngby,\\
Denmark,\\
pabe@dtu.dk\\

\vspace{1ex}
\noindent
Maria Montanucci

\vspace{.5ex}
\noindent
Universit\'a degli Studi della Basilicata,\\
Dipartimento di Matematica, Informatica ed Economia,\\
Campus di Macchia Romana,\\
Viale dell' Ateneo Lucano 10,\\
85100 Potenza,\\
Italy,\\
maria.montanucci@unibas.it


\begin{thebibliography}{99}
\bibitem{AQ} M. Abd\'on, L. Quoos, {\it On the genera of subfields of the Hermitian function field}, Finite Fields Appl., {\bf 10}, 271-284, (2004).

\bibitem{ABQ}  M. Abd\'on, J. Bezerra and L. Quoos,  {\it Further examples of maximal curves} J. Pure Appl. Algebra \textbf{213}, no.~6, 1192--1196, (2009).

\bibitem{ABB} N. Anbar, A. Bassa and P. Beelen,  {\it A complete characterization of Galois subfields of the generalized Giulietti-Korchm\'aros function field}, Finite Fields Appl. {\bf 48}, 318-330, (2017).

\bibitem{BMXY} A. Bassa, L. Ma, C. Xing and S.L. Yeo, {\it Towards a characterization of subfields of the Deligne-Lusztig function fields}, J. Combin. Theory Ser. A {\bf 120}, no. 7, 1351-1371, (2013).

\bibitem{CKT} A. Cossidente, G. Korchm\'aros and F. Torres, { \it Curves of large genus covered by the Hermitian curve}, Comm. Algebra \textbf{28}, no. 10, 4707-4728, (2000).

\bibitem{DO} Y. Dani\c{s}man, M. \"Ozdemir, {\it On the genus spectrum of maximal curves over finite fields}, J. Discr. Math. Sc. and Crypt. {\bf 18}, no.5, 513-529, (2015).

\bibitem{DM}  I. Duursma, K.H. Mak, { \it On maximal curves which are not Galois subcovers of the Hermitian curve} Bull. Braz. Math. Soc. {\bf 43}, no. 3, 453--465, (2012).

\bibitem{FG} S. Fanali, M. Giulietti, {\it Quotient curves of the GK curve}, Adv. Geom. {\bf 12} no.2, 239-268, (2012).

\bibitem{GGS} A. Garcia, C. G\"uneri and  H. Stichtenoth,  {\it A generalization of the Giulietti-Korchm\'aros maximal curve}, Advances in Geometry {\bf 10}, no. 3, 427--434, (2010).

\bibitem{GK} M. Giulietti, G. Korchm\'aros, {\it A new family of maximal curves over a finite field}, Math. Ann. {\bf 343}, 229--245, (2009).

\bibitem{GMZ}  M. Giulietti, M. Montanucci and G. Zini, {\it On maximal curves that are not quotients of the Hermitian curve}, Finite Fields Appl. {\bf 41}, 72--88, (2016).

\bibitem{GQZ}  M. Giulietti, L. Quoos and G. Zini, {\it Maximal curves from subcovers of the GK-curve}, J. Pure Appl. Algebra, {\bf 220}, no. 10, 3372--3383, (2016).

\bibitem{GOS} C. G\"uneri, M. \"Ozdemir and H. Stichtenoth,{\it The automorphism group of the generalized Giulietti-Korchm\'aros function field}, Adv. Geom., {\bf 13}, 369--380, (2013).

\bibitem{GMP} R.~Guralnick, B.~Malmskog and R.~Pries, {\it The automorphism group of a family of maximal curves}, J. Algebra, {\bf 361}, 92--106, (2012).

\bibitem{GSY} B.~Gunby, A.~Smith and A.~Yuan, Irreducible canonical representations in positive characteristic, Res. Number Theory {\bf 1}, Art. 3, (2015).

\bibitem{HKT} J.W.P.~Hirschfeld, G.~Korchm\'aros and F.~Torres, \emph{Algebraic Curves over a Finite Field,} \textit{Princeton Series in Applied Mathematics}, Princeton, (2008).

\bibitem{L1987} G.~Lachaud, \textit{Sommes d'Eisenstein et nombre de points de certaines courbes alg\'ebriques sur les corps finis}, C.R. Acad. Sci. Paris \textbf{305}(S\'erie I), 729-732, (1987).

\bibitem{MX} L. Ma, C. Xing, {\it On subfields of the Hermitian function fields involving the involution automorphism}, preprint, arXiv:1707.07314.

\bibitem{MZ} M. Montanucci, G. Zini, {\it On the spectrum of genera of quotients of the Hermitian curve}, Comm. Algebra, DOI 10.1080/00927872.2018.1455100, (2018).

\bibitem{Sti} H. Stichtenoth, \textit{Algebraic function fields and codes}, Springer, (2009).

\bibitem{VS} G. D. Villa Salvador, \textit{Topics in the theory of algebraic function fields}, Mathematics: Theory \& Applications. Birkh\"auser Boston, Inc., Boston, MA, (2006).

\end{thebibliography}
\end{document}